\documentclass[11pt]{article}
\usepackage{dsfont}
\usepackage{amsmath, amsthm, amssymb, mathrsfs, eufrak}
\usepackage[abbrev,non-sorted-cites]{amsrefs}
\usepackage[all]{xy}
\usepackage{CJK}
\usepackage{grffile}
\usepackage{graphicx}
\ProvidesPackage{stackrel}
\newtheorem{thm}{Theorem}[section]
\newtheorem{cor}[thm]{Corollary}
\newtheorem{lem}[thm]{Lemma}

\newtheorem{definition}[thm]{Definition}
\newtheorem{rmk}[thm]{Remark}

\newtheorem{prop}[thm]{Proposition}
\newtheorem{conj}[thm]{Conjecture}

\begin{document}

\title{Reduced Open Gromov-Witten Invariants on K3 Surfaces and Multiple Cover Formula}
\author{Yu-Shen Lin}

\maketitle

\begin{abstract}
  In the paper, we study the wall-crossing phenomenon of reduced open Gromov-Witten invariants on K3 surfaces with rigid special Lagrangian boundary condition. As a corollary, we derived the multiple cover formula for the reduced open Gromov-Witten invariants. 
\end{abstract}

\section{Introduction}
Gromov-Witten Invariants naively count the number of (pseudo)-holomorphic curves of a given symplectic manifold possibly with certain prescribed conditions. Indeed, when the curve class is primitive, one can slightly perturb the almost complex structure such that all the pseudo-holomorphic curves are Fredholm regular. However, multiple covers of holomorphic curves occur when the curve classes are not primitive. In this situation, multiple covers appear and the associated moduli spaces can have the dimension higher then the expected dimension. In particular, multiple covers cause transversality difficulties. The definition of Gromov-Witten invariants will involve the techniques of virtual fundamental cycles and referred as the "virtual counting" of holomorphic curves. The Gromov-Witten invariants are usually rational numbers due to the appearance of automorphisms of the holomorphic curves. It is not clear that what is the geometric meaning of these rational numbers and their relation with the "counting" of the holomorphic curves in the target Calabi-Yau manifolds. Part of the Gopakumar-Vafa conjecture lights up a way to explain these numbers geometrically: one can rearrange the generating function of the Gromov-Witten invariants $N^g_d$ of genus $g$ and degree $d$ on a Calabi-Yau $3$-fold as follows: 
   \begin{align}
\sum_{\beta\neq 0}\sum_{g\geq 0}N^g_dt^{2g-2}q^d=\sum_{\beta\neq 0}\sum_{g\geq 0}n^g_d\sum_{k>0}\frac{1}{k}\big(2\sin{\frac{kt}{2}}\big)^{2g-2}q^{kd}.
\end{align} 
Then the numbers $n^g_d$, which contains the equivalent information as teh Gromov-Witten invariants, magically become integers. In particular, one has the following transformation for $g=0$ Gromov-Witten invariants $N^0_d$ and the integer invariants $n^0_d$,
\begin{align} \label{998}
   N^0_d=\sum_{k|d}n^0_{\frac{d}{k}}k^{-3},
\end{align} which is usually known as the Aspinwall-Morrison formula \cite{AM}\cite{LLY}\cite{V}. Similar viewpoint also appears in Taubes' work, which relates the Gromov invariants and Seiberg-Witten invariants \cite{T3}.

One may also try to consider the open string analogue Gromov-Witten invariants by naively counting Riemann surfaces with boundaries inside a symplectic manifolds and certain boundary condition. Similarly to the original situation, there are compactified moduli spaces of holomorphic curves with boundaries. However, the relevant compactified moduli spaces generally admit real codimension one boundaries and thus make the virtual fundamental classes not well-defined. As a consequence, the open Gromov-Witten invariants are usually only defined case by case. Especially for the case when the target spaces are compact Calabi-Yau manifolds, the open Gromov-Witten invariants are defined in two cases in the literature: the target space admits an anti-symplectic involution and the Lagrangian is the fixed locus \cite{S2}. Or the target space is a symplectic Calabi-Yau $3$-fold and the Lagrangian is a homological $3$-sphere \cite{F2}. 

If we restrict ourselves to Calabi-Yau $2$-folds, the K3 surfaces, then there is another difficulty arising from the dimension reason. The virtual dimension of the moduli spaces of holomorphic discs in a K3 surface and Lagrangian boundary condition is minus one. In other words, there is no holomorphic discs in a K3 surface with respect to the generic almost complex structures. Therefore, even the issue of real codimension boundary of the moduli space mentioned above can be resolved, we will expect the invariant to be zero. Similar to the idea of reduced Gromov-Witten invariants, the author \cite{L4} introduces an auxiliary $1$-parameter family of complex structures on the K3 surfaces and consider the moduli space of holomorphc discs in such $1$-parameter family. Although the new moduli space is topologically the same as the original one, the naturally equipped Kuranishi structures are different. One may view the procedure as a symplectic analogue of changing tangent-obstruction theory for defining the reduced Gromov-Witten invariants in algebraic geometry. In the paper \cite{L4}, the author mainly studies the case when the situation when the boundaries of the holomorphic discs are not homologous to zero. In this case, the special Lagrangian boundary conditions behave like certain stability conditions and there will be wall-crossing phenomenon similar to Donaldson-Thomas invariants when the Lagrangian boundary conditions are changing. 

In this paper, we will study the case when the boundary of the holomorhpic discs are homologous to the zero class. In this case, the bubbling phenomenon doesn't really contribute to the wall-crossing formula for the reduced open Gromov-Witten invariants. However, another kind of real codimension one boundaries of the moduli spaces appear: the pseudo-holomorphic discs may degenerate to pseudo-holomorphic spheres with a marked points hitting the Lagrangian submanifold. Therefore, the corresponding reduced open Gromov-Witten invariants will jump when certain parameter vary and can be resembled from the closed reduced Gromov-Witten invariants \cite{L}\cite{MP}. Using the multiple cover formula of reduced Gromov-Witten invariants of genus zero \cite{MP}, we derive the multiple cover formula for holomorhpic discs, as predicted in \cite{FOOO}\cite{F2}.

The paper is organized as follows: In section two, we review the basic knowledge of hyperK\"ahler manifold and holomorphic discs in twistor family. In section three, we define the reduced open Gromov-Witten invariants and study its wall-crossing formula. As a corollary, we derive the multiple cover formula for reduced open Gromov-Witten invariants via the wall-crossing formula.

\section*{Acknowledgement} The author would like to thank Shing-Tung Yau for constant support and encouragement. The author would also like to thank Jun Li and Zhiyuan Li for helpful discussion.

\section{Holomorphic Discs in Twistor Family}
 We will have a review of hyperK\"ahler geometry and holomorphic discs in the twistor family in this section. 
 \subsection{HyperK\"ahler Geometry and HyperK\"ahler Rotation}
 \begin{definition}
 A K3 surface is a compact complex surface with vanishing first fundamental group and vanishing first Chern class.
 \end{definition}
 
 \begin{rmk}\label{55}
    \cite{S3} Every K3 surface is K\"ahler.
 \end{rmk}
 From the vanishing of the first Chern class and Remark \ref{55}, the holonomy of a K3 surface falls in $SU(2)=Sp(1)$. In particular, every K3 surface is hyperK\"ahler. More precisely, let $X$ be a K3 surface and $\Omega$ be a nowhere vanishing holomorphic $(2,0)$-form guaranteed by the vanishing of first Chern class. Given any K\"ahler class $[\omega]$, there exists a Ricci-flat metric $\omega$ in the K\"ahler class $[\omega]$ such that 
   \begin{align*}
      \omega^2=c\Omega\wedge \bar{\Omega},
   \end{align*} where $c\in \mathbb{R}_{>0}$ from the Calabi conjecture \cite{Y1}. We will scale $\Omega$ (but still denote it $\Omega$) with an overall constant such that $c=\frac{1}{2}$. We will call $(\omega,\Omega)$ a hyperK\"ahler triple. A hyperK\"ahler triple $(\omega,\Omega)$ determines three almost complex structures $J_1,J_2,J_3$ via the relation:
   \begin{enumerate}
     \item 
          $\omega(\cdot,\cdot)=g(J_3\cdot,\cdot) $ and
        
      \item   
         $\Omega(\cdot,\cdot)=g(J_1\cdot,\cdot)+ig(J_2\cdot,\cdot)$
         is the holomorphic $2$-form with respect to the complex structure $J_3$.
   \end{enumerate} It can be checked that the three almost complex structures are actually integrable and satisfy the quaternionic relation. These induce a family of
 complex structures parametrized by $\mathbb{P}^1$, called the twistor line, on the underlying space $\underline{X}$ of $X$. Explicitly, they are given
 by
 \begin{equation*}
    J_{\zeta}=\frac{i(-\zeta+\bar{\zeta})J_1-(\zeta+\bar{\zeta})J_2+(1-|\zeta|^2)J_3}{1+|\zeta|^2},
    \hspace{3mm}
    \zeta\in \mathbb{P}^1.
 \end{equation*}
  The holomorphic
 symplectic $2$-forms $\Omega_{\zeta}$ with respect to the compatible
 complex structure $J_{\zeta}$ is given by
 \begin{equation}  \label{38}
   \Omega_{\zeta}=-\frac{i}{2\zeta}\Omega+\omega-\frac{i}{2}\zeta\bar{\Omega}.
   \end{equation} In particular, straightforward computation from (\ref{38}) gives
 \begin{prop} \label{49}
 Assume $\zeta=e^{i\vartheta}$, then we have
   \begin{align} \label{301}
      \omega_{\vartheta}:=\omega_{\zeta}&=-\mbox{Im}(e^{-i\vartheta}\Omega),\\
      \Omega_{\vartheta}:=\Omega_\zeta&=\omega-iRe(e^{-i\vartheta}\Omega).
   \end{align}
 \end{prop}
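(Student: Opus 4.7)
The plan is to specialize the general formulas at $\zeta=e^{i\vartheta}$ and then separate real and imaginary parts. The argument is purely algebraic; the only real issue is bookkeeping of the sign conventions established in the excerpt.

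First I would dispatch the formula for $\Omega_\vartheta$, which follows most directly from equation~(\ref{38}). Substituting $\zeta=e^{i\vartheta}$ and using $\bar\zeta=1/\zeta=e^{-i\vartheta}$, the two $\Omega$-terms become $-\tfrac{i}{2}e^{-i\vartheta}\Omega$ and $-\tfrac{i}{2}e^{i\vartheta}\bar\Omega$, which are complex conjugates of each other. Writing $e^{-i\vartheta}\Omega=A+iB$ with $A=\mathrm{Re}(e^{-i\vartheta}\Omega)$ and $B=\mathrm{Im}(e^{-i\vartheta}\Omega)$, the sum of those two terms collapses to $-iA$, while the $\omega$-summand is unaffected. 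This gives the identity $\Omega_\vartheta=\omega-i\,\mathrm{Re}(e^{-i\vartheta}\Omega)$.

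For $\omega_\vartheta$ the computation is not literally read off from~(\ref{38}); instead I would specialize the explicit formula for $J_\zeta$ displayed before~(\ref{38}). Because $|\zeta|^{2}=1$, the $J_3$-coefficient vanishes and the denominator reduces to $2$, leaving $J_\vartheta=\sin\vartheta\,J_1-\cos\vartheta\,J_2$. Adopting the natural convention $\omega_\vartheta(\cdot,\cdot)=g(J_\vartheta\cdot,\cdot)$ that parallels the definition of $\omega$, and setting $\omega_k(\cdot,\cdot):=g(J_k\cdot,\cdot)$, one gets $\omega_\vartheta=\sin\vartheta\,\omega_1-\cos\vartheta\,\omega_2$. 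The identification $\Omega=\omega_1+i\omega_2$ from the excerpt then lets me expand $e^{-i\vartheta}\Omega=(\cos\vartheta-i\sin\vartheta)(\omega_1+i\omega_2)$; taking the negative of the imaginary part returns exactly the same linear combination, yielding the first identity.

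The only potential snag is sign consistency: the conclusion hinges on which slot $J_k$ occupies in the definition of $\omega_k$ and on the sign in $\Omega=g(J_1\cdot,\cdot)+ig(J_2\cdot,\cdot)$ versus its conjugate. Once those conventions are matched with the definitions preceding equation~(\ref{38}), both identities drop out in parallel with no further geometric input required.
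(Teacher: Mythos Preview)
Your argument is correct and is precisely the ``straightforward computation'' the paper alludes to: the paper gives no details beyond that phrase, so your explicit substitution of $\zeta=e^{i\vartheta}$ into~(\ref{38}) for $\Omega_\vartheta$, and into the displayed formula for $J_\zeta$ (together with $\omega_k=g(J_k\cdot,\cdot)$ and $\Omega=\omega_1+i\omega_2$) for $\omega_\vartheta$, is exactly what is intended. The only remark is that the paper's phrase ``from~(\ref{38})'' is slightly imprecise, since~(\ref{38}) alone specifies $\Omega_\zeta$ but not $\omega_\zeta$; your use of the $J_\zeta$ formula to pin down $\omega_\vartheta$ is the right way to complete the computation.
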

 
 \begin{rmk} \label{300}
    Let $L$ be a holomorphic Lagrangian in $(\underline{X},\omega,\Omega)$, namely,
    a complex submanifold of $X$ with $\mbox{dim}_{\mathbb{R}}L=\mbox{dim}_{\mathbb{C}}X$ and $\Omega|_L=0$. Assume that the
 north and south pole of the twistor line correspond to the hyperK\"ahler triple
 $(\omega,\Omega)$ and $(-\omega,\bar{\Omega})$ respectively, making
 $L$ a holomorphic Lagrangian. The hyperK\"ahler structures
 corresponding to the equator $\{\zeta=e^{i\vartheta}:|\zeta|=1\}$
 make $L$ a special Lagrangian in
 $X_{\vartheta}=(\underline{X},\omega_{\vartheta},\Omega_{\vartheta})$, i.e. $\omega_{\vartheta}|_{L}=\mbox{Im}\Omega_{\vartheta}|_{L}=0$ by Proposition \ref{49}.
 In particular, if $(\underline{X},\omega,\Omega)$ admits a holomorphic
 Lagrangian fibration, $X_{\vartheta}$ admits a special Lagrangian fibration structure, for each $\vartheta\in S^1$.
 This is the so-called hyperK\"ahler rotation trick.
 \end{rmk}
 \subsection{Holomorphic Discs in Twistor Family}
   Let $(X,\omega,\Omega)$ be a K3 surface with a choice of holomorphic volume form $\Omega$ and a Ricci-flat metric $\omega$ satisfying $2\omega^2=\Omega\wedge \bar{\Omega}$. Let $L$ be a holomorphic Lagrangian submanifold in $X$. From Remark \ref{300}, this induces an $S^1$-family of hyperK\"ahler manifolds $\mathfrak{X}^{[\omega]}=\{X_{\vartheta}\}$\footnote{Notice that the family $\{X_{\vartheta}\}$ does depend on the choice of $[\omega]$. However, we will just omit the subindex $[\omega]$ for simplicity.} containing $L$ as a special Lagrangian submanifold. Let $\gamma\in H_2(X,L)$ be a relative class and $\mathcal{M}_{\gamma}(X_{\vartheta},L)$ denotes the moduli space of stable discs holomorphic with respect to the complex structure of $X_{\vartheta}$, with boundary on $L$ and relative class $\gamma$. From the index theorem, the virtual dimension of the moduli space is negative. This suggests that  there is no pseudo-holomorphic discs in a K3 surface with special Lagrangian boundary condition with respect to a generic almost complex structure. Since we start with the data $(X,\omega,\Omega,L)$, there is no any single $\vartheta\in S^1$ is distinguished from the others. It is more natural to consider the following family version of moduli space 
     \begin{align*}
       \mathcal{M}_{\gamma}(\mathfrak{X}^{[\omega]},L):=\bigcup_{\vartheta\in S^1} \mathcal{M}_{\gamma}(X_{\vartheta},L),
     \end{align*} which is the moduli space of the stable discs holomorphic with respect to some complex structures of $X_{\vartheta}$ for some $\vartheta\in S^1$.
   A priori, the new moduli space $\mathcal{M}_{\gamma}(\mathfrak{X}^{[\omega]},L)$ may be complicated, say there might be infinitely many $\vartheta\in S^1$ such that $\mathcal{M}_{\gamma}(X_{\vartheta},L)\neq \emptyset$. Assume that $\mathcal{M}_{\gamma}(\mathfrak{X}^{[\omega]},L)\neq \emptyset$, then there exist a holomorphic disc in $X_{\vartheta}$ of relative class $\gamma$ for some $\vartheta$. In particular, we have 
      \begin{align*}
       0= \int_{\gamma}\mbox{Im}\Omega_{\vartheta}=-\int_{\gamma}\mbox{Re}(e^{-i\vartheta}\Omega) \\
       0< \int_{\gamma}\omega_{\vartheta}=-\int_{\gamma}\mbox{Im}(e^{-i\vartheta}\Omega).
      \end{align*} 
   In other words, if the moduli space $\mathcal{M}_{\gamma}(\mathfrak{X}^{[\omega]},L)\neq \emptyset$, then $\mathcal{M}_{\gamma}(X_{\vartheta},L)=\emptyset$ except $\vartheta=\mbox{Arg}\int_{\gamma}\Omega+\frac{\pi}{2}$. Although that $\mathcal{M}_{\gamma}(\mathfrak{X}^{[\omega]},L)\neq \emptyset$ topologically is the same as $\mathcal{M}_{\gamma}(X_{\vartheta},L)$ for some $\vartheta\in S^1$, its virtual dimension is zero and equipped naturally with a different Kuranishi structure \cite{L4}. This observation motivates the definition of the central charge in the next section.
       
   The moduli space $\mathcal{M}_{\gamma}(\mathfrak{X}^{[\omega]},L)$ a priori may admit real codimension one boundaries. There are two kinds of real codimension one strata of the boundaries: A holomorphic discs of relative class $\gamma\in H_2(X,L)$ can degenerate to a union of two discs of relative classes sum up to $\gamma$ and holomorphic to the same complex structures. This is usually called the disc bubbling phenomenon of holomorphic discs. We will call them the boundary of type I (see Figure \ref{1053}(a)) and denote it by  
    \begin{align}\label{912}
                   \partial_I \mathcal{M}_{\gamma}(\mathfrak{X}^{[\omega]},L)=
                  \cup \bigcup_{\substack{\gamma_1+\gamma_2=\gamma,\\ Z_{\gamma_1}/Z_{\gamma_2}\in \mathbb{R}_{>0} }}(\mathcal{M}_{1,\gamma_1}(\mathfrak{X}^{[\omega]},L)\; {}_{(ev_0,ev_{\vartheta})} \! \times_{(ev_0,ev_{\vartheta})}
                  \mathcal{M}_{1,\gamma_2}(\mathfrak{X}^{[\omega]},L))/\mathbb{Z}_2
               \end{align}  
   
   Now assume that $\gamma$ has its boundary $\partial \gamma=0\in H_1(L)$ homologous to zero. A holomorphic disc of relative class $\gamma$ (with no marked points) can "degenerate" to a rational sphere of class $\tilde{\gamma}$ with a marked points attached to the Lagrangian submanifold $L$, where $\tilde{\gamma}\in H_2(X)$ is a lifting of $\gamma\in H_2(X,L)$ via the exact sequence 
      \begin{align} \label{46}
           0\rightarrow H_2(L)\rightarrow H_2(X)\overset{\iota}{\rightarrow} H_2(X,L) \rightarrow H_1(L)=0.
       \end{align}
      We will refer this kind of the boundary as the boundary of type II \footnote{Notice that this is different from the sphere bubbling phenomenon, which is a degeneration of real codimension two.} (see Figure \ref{1053} (b)) and denote it by $\partial_{II} \mathcal{M}_{\gamma}(\mathfrak{X}^{[\omega]},L)$. In other words, 
      \begin{align} \label{5}
         {\mathcal{M}^{cl}_{\tilde{\gamma},1}(\mathfrak{X}^{[\omega]})}_{ev}\times L \subseteq \mathcal{M}_{\gamma}(\mathfrak{X}^{[\omega]},L)
      \end{align}as real codimension one boundary. Here $\mathcal{M}^{cl}_{\tilde{\gamma},1}(\mathfrak{X}^{[\omega]})$ denotes the moduli space of stable maps of rational curves into $\mathfrak{X}^{[\omega]}$ with one marked point and $ev$ is the evaluation map associated to the marked point. If $\tilde{\gamma}$ can be realized as holomorphic cycles in the $S^1$ family $\mathfrak{X}^{[\omega]}$, then we have $[\omega]\cdot \tilde{\gamma}=0$. Since any two homology classes $\tilde{\gamma}_i$ with $\iota_*(\tilde{\gamma_i})=\gamma$ is differed by a multiple of $[L]$. Therefore, as long as $[\omega]$ does not fall in any of the hyperplanes defined by $[\omega]\cdot \tilde{\gamma}=0$, with $\iota_*(\tilde{\gamma})=\gamma$, then $\mathcal{M}_{\gamma}(\mathfrak{X}^{[\omega]},L)$ will have no real codimension one boundary of the type II. A priori, there are infinite hyperplanes parametrized by $\tilde{\gamma}+kL$ may produce real codimension one boundary of type II.
      When $|k|$ is large, 
              \begin{align*}
           (\tilde{\gamma}+k[L])^2=\tilde{\gamma}^2+2k\tilde{\gamma}\cdot [L]+k^2[L]^2\sim -2k^2.
         \end{align*} The divisibility of $\tilde{\gamma}+k[L]$ is less than $k^2$ since each irreducible holomorphic curves in a K3 surface has self-intersection greater or equal to $-2$. Therefore, there will be only finitely \footnote{The argument of the paper is also valid for $L$ with higher genus than zero but there might be infinitely many valid hyperplanes.} many hyperplanes such that real codimension one boundary of type II can happen. We will call them valid hyperplanes and denoted by $W_{\tilde{\gamma}}$. The boundary of type II will play an essential role in the wall-crossing formula (\ref{15}) in this paper. 
    \begin{figure}\label{1053} 
             \begin{center}
             \includegraphics[height=3in,width=6in]{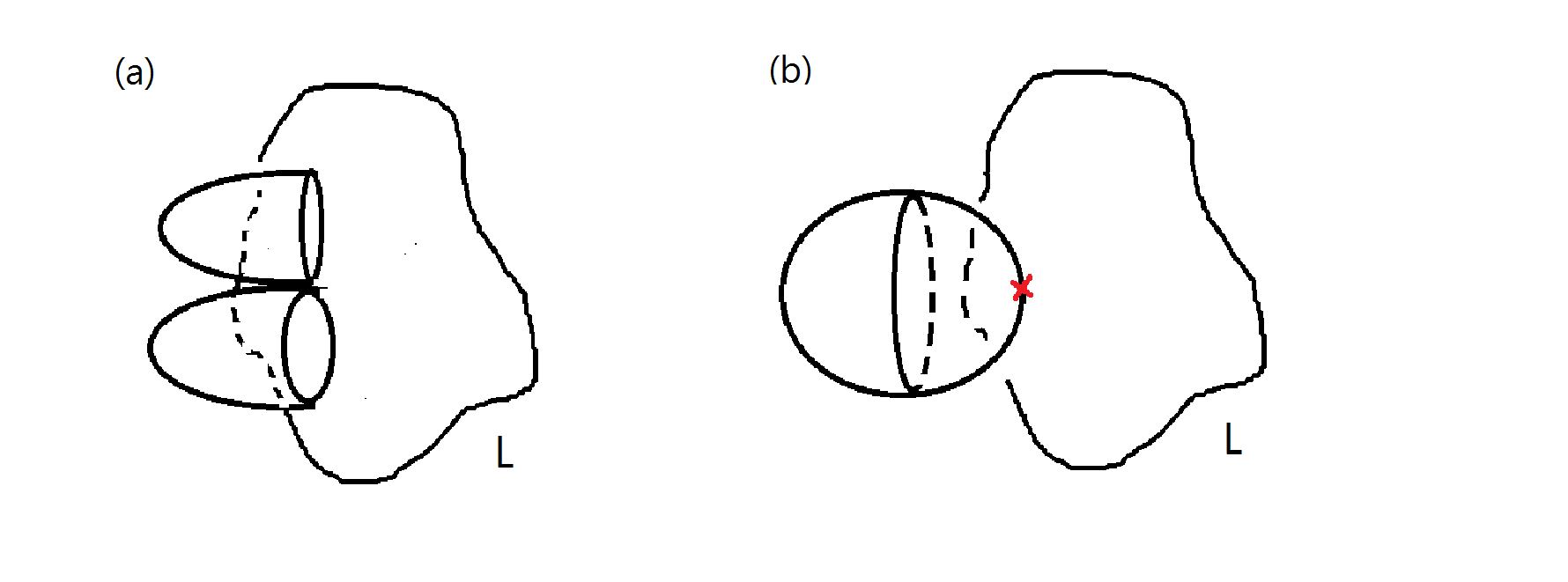}
             \caption{(a)boundary of type I (b)boundary of type II}
             \end{center}
             \end{figure}    
   
   \subsection{Orientation of the Relevant Moduli Spaces}\label{88}
   The moduli space $\mathcal{M}_{\gamma}(\mathfrak{X}^{[\omega]},L)$ is virtually $\mathbb{R}\times \mathcal{M}_{\gamma}(X_{\vartheta},L)$ locally. Since the first factor is the tangent of $S^1$ in the twistor with the positive orientation, it suffices to orient the moduli space $\mathcal{M}_{\gamma}(X_{\vartheta},L)$. By the Theorem 8.1.1 \cite{FOOO}, it suffices to choose a relative spin structure of $L$ to determine a coherent orientation on the moduli spaces $\mathcal{M}_{\gamma}(\mathfrak{X}^{[\omega]},L)$. Since $TL$ is nontrivial and $w_2(TL)\neq 0$, there is no canonical spin structure we can choose unlike the case studied in \cite{L4}. We will going to use $V=\mathcal{O}_{K3}(L)$ and thus $TL\oplus V|_L$ is a trivial oriented rank four bundle and admits a trivial spin structure. This gives the orientation of the moduli space of  $\mathcal{M}_{\gamma}(\mathfrak{X}^{[\omega]},L)$.

\section{The Reduced Open Gromov-Witten Invariants for Rigid Boundary Condition}
Let $\mathbb{L}_{K3}$ be the K3 lattice, i.e.
    \begin{align*}
       \mathbb{L}_{K3}= (-E_8)\oplus (-E_8) \oplus \big(\begin{smallmatrix}
             0 & 1\\
             1 & 0
           \end{smallmatrix}\big) \oplus \big(\begin{smallmatrix}
                        0 & 1\\
                        1 & 0
                      \end{smallmatrix}\big) \oplus \big(\begin{smallmatrix}
                                   0 & 1\\
                                   1 & 0
                                 \end{smallmatrix}\big)  . 
    \end{align*} Here $E_8$ is the unique even, unimodular, positive definite lattice of rank 8. 
\begin{definition}
A marked K3 surface is a pair $(X,\alpha)$ where $X$ is a K3 surface and an isomorphism $\alpha:H^2(X,\mathbb{Z})\rightarrow \mathbb{L}_{K3}$.
\end{definition}
A marking $\alpha$ will induce an identification $\alpha^{\vee}:(\mathbb{L}_{K3})^{\vee}\cong H_2(X,\mathbb{Z})$.
Let $[L]\in {(\mathbb{L}_{K3})}^{\vee}$ with $[L]^2=-2$. Let $\mathcal{M}_{[L]}$ to be the moduli space of K3 surface with $[L]$ can be represented by holomorphic cycles. More precisely,
   \begin{align*}
     \mathcal{M}_{[L]}:=\{(X,\alpha) & \mbox{ marked  K3 surface}| \alpha^{\vee}([L])\wedge \Omega_{X}=0,  \\ 
       &\mbox{ where $\Omega_{X'}$ is a holomorphic $(2,0)$-form of $X'$}\}/ \sim.
   \end{align*} Here two marked K3 surfaces $(X',\alpha')$ and $(X'',\alpha'')$ are equivalent if there is a diffeomorphism $f:X'\rightarrow X''$ such that $\alpha'\circ f^*=\alpha''$. The moduli space $\mathcal{M}_{[L]}$ is a divisor in the moduli space of marked K3 surfaces. Assuming that on a Zariski open subset of $\mathcal{M}_{[L]}$, which is denoted by $\mathcal{M}'_{[L]}$, the homology class $[L]$ via the marking can be realized as a unique smooth rational curve, which we also denote it by $L$ by abuse of notation, in the corresponding K3 surface. We will omit the marking for the simplicity of the notations if there is no confusion. For each $\tilde{\gamma}\in \mathbb{L}_{K3}$, there is a non-empty divisor 
      \begin{align*}
         \mathcal{D}_{\tilde{\gamma}}:=\mathcal{M}'_{[L]}\cap \mathcal{M}_{\tilde{\gamma}}
      \end{align*} in the moduli space $\mathcal{M}'_{[L]}$. 
    Indeed, notice that $\mathcal{M}_{[L]}\cap \mathcal{M}_{\tilde{\gamma}}$ is the intersection of two hyperplanes in the period domain of marked K3 surfaces, which can be viewed as a hyperplane on the complement of a non-degenerate conic. Thus $\mathcal{M}_{[L]}\cap \mathcal{M}_{\tilde{\gamma}}$ is non-empty. In particular, its dense open subset $\mathcal{D}_{\tilde{\gamma}}=\mathcal{M}'_{[L]}\cap \mathcal{M}_{\tilde{\gamma}}$ is non-empty as well.

   The exact sequence (\ref{46}) glues together to a exact sequence of local systems of lattices over $\mathcal{M}'_{[L]}$. In particular, let 
   \begin{align*}
         \Gamma:=\bigcup_{s\in \mathcal{M}'_{[L]}}H_2(X_s,	L)\cong \mathcal{M}'_{[L]}\times \big(H_2(X)/\mbox{Im}H_2(L)\big)
   \end{align*} be a trivial local system over $\mathcal{M}'_{[L]}$. Choose a local section of holomorphic $(2,0)$ form $\{\Omega_s\}$ over $\mathcal{M}'_{[L]}$, we define the central charge to be 
    \begin{align*}
     Z:\Gamma\cong \mathcal{M}'_{[L]}\times \big(& H_2(X)/\mbox{Im}H_2(L)\big)  \rightarrow \mathbb{C}\\
    &(s,\gamma) \longmapsto Z_{\gamma}(s)= \int_{\gamma}\Omega_s.
    \end{align*}
  Notice that since the choice of $\Omega_s$ might differ by elements of $\mathbb{C}^*$. Thus the central charge $Z$ does depend on the choice of the local section but the locus defined by the equation
    \begin{align}\label{89}
       \mbox{Arg}Z_{\gamma_1}=\mbox{Arg}Z_{\gamma_2} 
    \end{align} does not. We will denote the above locus by 
    \begin{align*}
       W'_{\gamma_1,\gamma_2}:=\{s\in \mathcal{M}_{\alpha[L]}| \mbox{Arg}Z_{\gamma_1}(s)=\mbox{Arg}Z_{\gamma_2}(s)\} \subseteq \mathcal{M}'_{[L]}.
    \end{align*} and we write $ W_{\gamma}':=\bigcup_{\gamma_1+\gamma_2=\gamma} W'_{\gamma_1,\gamma_2}$. Notice that the both sides of the equation (\ref{89}) is pluri-harmonic and thus $W'_{\gamma_1,\gamma_2}$ is of real codimension one and locally separate $\mathcal{M}'_{[L]}$ into to components. Moreover, the wall $W'_{\gamma_1,\gamma_2}$ is a real analytic Zariski closed subset in $\mathcal{M}'_{[L]}$. The union in $W'_{\gamma}$ is always finite due to the Gromov compactness theorem. 
    
    \begin{definition} \label{1}
We say a charge $\gamma\in \Gamma$ is strongly primitive if there does not exist $k \geq 2\in \mathbb{N}$ and nonzero classes $\gamma',\gamma''\in \Gamma$ such that
  \begin{align*}
  \gamma=k\gamma'+\gamma'' \mbox{ and } Z_{\gamma''}=0.
  \end{align*}
\end{definition}  
Then we have the following theorem:
\begin{thm}
 Assume the charge $\gamma\in \Gamma$ is strongly primitive, $s=(X,\alpha)\notin W'_{\gamma}$, and $[\omega]$ doesn't fall in the hyperplane above, then the moduli space $\mathcal{M}_{\gamma}(\mathfrak{X}_s^{[\omega]},L)$\footnote{Here we use the subindex $s$ to indicate the dependence of the $S^1$-family moduli spaces on $s$.} has no real codimension one boundary. 
\end{thm}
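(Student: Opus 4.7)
The plan is to recall the classification of real codimension one boundary strata of $\mathcal{M}_\gamma(\mathfrak{X}_s^{[\omega]},L)$ from Section 2.2 into the two types---Type I disc bubbling and Type II sphere degeneration with a marked point on $L$---and then to match each of the three hypotheses of the theorem against one of these strata.

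To rule out Type I, I would suppose $\gamma = \gamma_1 + \gamma_2$ in $H_2(X,L)$ and that both $\gamma_i$ are represented by $J_\vartheta$-holomorphic discs for the same $\vartheta \in S^1$. The positivity of the symplectic area $\int_{\gamma_i}\omega_\vartheta > 0$ combined with $\int_{\gamma_i}\mbox{Im}\Omega_\vartheta = 0$ forces $\vartheta = \mbox{Arg}Z_{\gamma_i}(s) + \pi/2$, exactly as in the computation at the beginning of Section 2.2. Hence $\mbox{Arg}Z_{\gamma_1}(s) = \mbox{Arg}Z_{\gamma_2}(s)$, so $s \in W'_{\gamma_1,\gamma_2} \subseteq W'_\gamma$, contradicting $s \notin W'_\gamma$. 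The strong primitivity hypothesis enters to rule out degenerate decompositions of the form $\gamma = k\gamma' + \gamma''$ with $Z_{\gamma''}(s) = 0$, for which the angle-matching argument above is ill-defined; strong primitivity is precisely the combinatorial condition that excludes such splittings, not already handled by $s \notin W'_\gamma$.

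For Type II, I would suppose the disc degenerates to a rational sphere of class $\tilde\gamma \in H_2(X)$ with $\iota_*(\tilde\gamma) = \gamma$ and a marked point landing on $L$. For this sphere to be $J_\vartheta$-holomorphic for some $\vartheta$, one needs $\int_{\tilde\gamma}\Omega_\vartheta = 0$. Expanding via (\ref{38}) and separating real and imaginary parts yields in particular $[\omega] \cdot \tilde\gamma = 0$, placing $[\omega]$ on one of the valid hyperplanes $W_{\tilde\gamma}$ described at the end of Section 2.2. This contradicts the hypothesis on $[\omega]$, so no Type II boundary can occur.

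I expect the main obstacle to lie in the Type I analysis, specifically in justifying that the combined hypotheses $s \notin W'_\gamma$ and strong primitivity of $\gamma$ really exhaust every combinatorial type of codimension one boundary from disc bubbling. The generic case of a splitting with both $Z_{\gamma_i}(s) \neq 0$ is dispatched cleanly by the angle argument, but more care is needed to see that the finer degenerations involving components with vanishing central charge---the ones made explicit in Definition \ref{1}---are also all excluded.
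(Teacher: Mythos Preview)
Your proposal is correct and matches the paper's approach. The paper in fact states this theorem without an explicit proof, relying instead on the preceding discussion: the description of the Type I and Type II boundary strata in Section~2.2 (in particular equation~(\ref{912}) and the valid-hyperplane discussion), together with the definitions of $W'_{\gamma}$ and of strong primitivity immediately before the theorem. Your reconstruction---matching $s\notin W'_\gamma$ to the exclusion of generic Type I bubbling via the phase argument, strong primitivity to the degenerate Type I splittings where the argument of a central charge is undefined, and the condition on $[\omega]$ to the exclusion of Type II via $[\omega]\cdot\tilde\gamma\neq 0$---is exactly the argument the paper leaves implicit.
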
    

In particular, it admits a virtual fundamental cycle $[\mathcal{M}^{[\omega]}_{\gamma}(\mathfrak{X}_s,L)]^{vir}$ \cite{FOOO} and the associated reduced open Gromov-Witten invariant can be defined as
    \begin{align}
     \tilde{\Omega}^{[\omega]}(s,\gamma):=\int_{[\mathcal{M}_{\gamma}(\mathfrak{X}^{[\omega]}_s,L)]^{vir}}1.
    \end{align}

A geometric interpretation of the invariant is the following: if one slightly perturb the $S^1$-family of complex structures to a generic $S^1$-family of almost complex structures, then all the pseudo-holomorphic discs in the new family are immersed and $\tilde{\Omega}^{[\omega]}$ is the signed count the number of immersed pseudo-holomorphic discs in the perturbed $S^1$-family. Another naive geometric point of view of $\tilde{\Omega}^{[\omega]}(\gamma)$ is that it counts the number of "special Lagrangian discs" with boundaries on the holomorphic Lagrangian $L$ in the family $\mathfrak{X}^{[\omega]}$. This might be closer to the point of view of \cite{GMN}. However, we don't have the notion of singular special Lagrangian discs nor a good compactification of the relevant moduli spaces.

In general, the moduli space $\mathcal{M}_{\gamma}^{[\omega]}(\mathfrak{X}_s,L)$ is locally modeled by its Kuranishi charts. Namely, for each point $p\in \mathcal{M}_{\gamma}^{[\omega]}(\mathfrak{X}_x,L)$, there exists a $5$-tuple $(E,V,\Gamma, \psi,s)$, where 
  \begin{enumerate}
     \item $V_p$ is a smooth manifold (with corners) and $E_p$ is a
       smooth vector bundle over $V_p$.
       \item $\Gamma_p$ is a finite group acting on $E_p \rightarrow
       V_p$.
       \item $s_p$ is a $\Gamma_p$-invariant continuous section of
       $E_p$.
       \item $\psi_p:s^{-1}(0)\rightarrow M$ is continuous and induced
       homeomorphism between $s^{-1}(0)/\Gamma_p$ and a neighborhood of
       $p\in M$.
    \end{enumerate}
There are compatibility conditions between charts. We will refer the details of the construction and properties of the Kuranishi structures to the fundamental work of Fukaya-Oh-Ohta-Ono \cite{FOOO}. In general, the section $s_p$ will not intersect transversally with the zero section of $E_p$ and this causes difficulty for defining the Gromov-Witten type invariants. For the purpose of Floer theory, Fukaya-Oh-Ohta-Ono \cite{FOOO} further constructed the perturbed multi-sections for the transversality issue and develop the notion of smooth correspondences \cite{FOOO}\cite{F1}\cite{F2}. We will follow the notation of smooth correspondence in \cite{F1}. Let $\mathcal{M},\mathcal{M}_s,\mathcal{M}_t$ be smooth manifolds and $f_s:\mathcal{M}\rightarrow \mathcal{M}_s$, $f_t:\mathcal{M}\rightarrow \mathcal{M}_t$ be smooth maps. Furthermore, assume that $f_t$ is submersive. Then given a smooth differential form $\rho$ of $\mathcal{M}_s$, we associate a smooth correspondence
   \begin{align*}
      Corr_*(\mathcal{M};f_s,f_t)(\rho):=(f_t)_*(f_s)^*\rho,
   \end{align*} where $(f_t)_*$ denotes the integration along fibres. Fukaya \cite{F1} generalized the notion when $\mathcal{M}$ is a Kuranishi space via the perturbed multi-sections and $f_s$ is weakly submersive. With this technology, we will define the open Gromov-Witten invariants to be 
  \begin{align}
     \tilde{\Omega}^{[\omega]}(s,\gamma):=Corr_*(\mathcal{M}^{[\omega]}_{\gamma}(\mathfrak{X}_s,L);tri,tri)(1).
  \end{align}

The construction of perturbed multi-section and smooth correspondence is delicate and complicated. We will refer the readers to \cite{FOOO}\cite{F1}. There are several artificial choices have to be made during the construction of Kuranishi structures and the perturbed multi-sections. A priori, one might get two different open Gromov-Witten invariants $\tilde{\Omega}^{[\omega]}(s,\gamma)$ and $\tilde{\Omega}^{'[\omega]}(s,\gamma)$
, for any two sets of choices. However, there exists a pseudo-isotopy between the resulting Kuranishi structures. Applying the smooth correspondence to the boundary of the moduli space $\partial\big(\mathcal{M}_{\gamma}(\mathfrak{X}^{[\omega]}_s,L)\times [0,1]\big)$ and together with the Stokes theorem of smooth correspondence\cite{F1}, we get
   \begin{align} \label{911}
     &\tilde{\Omega}^{[\omega]}(s,\gamma)-\tilde{\Omega}^{'[\omega]}(s,\gamma) \\
      =&Corr_*(\partial\mathcal{M}_{\gamma}(\mathfrak{X}^{[\omega]}_s,L)\times [0,1];tri,tri)(1) \notag \\
  =&Corr_*(\partial_I\mathcal{M}_{\gamma}(\mathfrak{X}^{[\omega]}_s,L)\times [0,1];tri,tri)(1)+Corr_*(\partial_{II}\mathcal{M}_{\gamma}(\mathfrak{X}^{[\omega]}_s,L)\times [0,1];tri,tri)(1). \notag 
   \end{align}
We omit the last term of (\ref{911}) by assuming $[\omega]$ does not fall on any of the valid hyperplanes. Therefore, the type II boundary of the moduli space is empty. The contribution of the second term of (\ref{911}) is computed in \cite{L4} and (with the notation in (\ref{912})) admits a factor of $\langle \gamma_1,\gamma_2\rangle$ and thus vanishes. In particular, this indicates that the open Gromov-Witten invariants $\tilde{\Omega}^{[\omega]}(s,\gamma)$ is well-defined if $[\omega]$ does not fall on any of the valid hyperplanes.

Assume there is a $1$-parameter family of hyperK\"ahler structures $(s_t,[\omega_t])$ on K3 surface, where $s_t\in \mathcal{M}'_{[L]}$ and $[\omega_t]$ is a K\"ahler class of the K3 surface parametrized by $s_t$. Let $\cup_{t\in [0,1]}\mathcal{M}_{\gamma}(\mathfrak{X}^{[\omega_t]}_{s_t},L)$ be the moduli space of holomorphic discs in the $2$-parameter family of complex structures. Similarly, the data induces pseudo-isotopy of the Kuranishi structures and together with the Stokes theorem for smooth correspondence implies 
\begin{align*} 
  \tilde{\Omega}^{[\omega_1]}(s_1,\gamma)-\tilde{\Omega}^{[\omega_0]}(s_0,\gamma)= Corr_*(\partial_{II}\big(\bigcup_{t\in[0,1]} \mathcal{M}_{\gamma}(\mathfrak{X}^{[\omega_t]}_{s_t},L)\big);tri,tri)(1).
\end{align*}
  Thus, we reach the following proposition of reduced open Gromov-Witten ivnariants for rigid boundary condition.  
\begin{prop} \label{10}
   Let $\gamma\in H_2(X,L)$ and $s\in \mathcal{M}'_{[L]}$.
   Assume that $[\omega].\tilde{\gamma}\neq 0$ for any lifting $\tilde{\gamma}\in H_2(X,\mathbb{Z})$. Then $\tilde{\Omega}^{[\omega]}(s,\gamma)$ is well-defined and only depends on the path connected components contains $[\omega]$ in the complement of valid hyperplanes in the K\"ahler cone.
\end{prop}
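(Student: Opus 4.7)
\emph{Proof plan.} The proposition packages together two statements: well-definedness (independence of the auxiliary choices in constructing the Kuranishi data and perturbed multi-sections) and invariance under deformation of $[\omega]$ within a connected component of the complement of the valid hyperplanes in the K\"ahler cone. Both statements follow from the same mechanism: construct a pseudo-isotopy between the relevant Kuranishi structures, apply the Stokes formula for smooth correspondences from \cite{F1}, and show that each resulting real codimension-one boundary contribution vanishes. The discussion preceding the proposition already sketches this mechanism; the plan is to assemble the pieces cleanly.

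For well-definedness, given two sets of choices producing invariants $\tilde{\Omega}^{[\omega]}(s,\gamma)$ and $\tilde{\Omega}^{'[\omega]}(s,\gamma)$, I would interpolate them to a Kuranishi structure on $\mathcal{M}_{\gamma}(\mathfrak{X}^{[\omega]}_s,L)\times[0,1]$ and apply Stokes as in (\ref{911}) to write the difference as a sum of contributions from $\partial_I\mathcal{M}_{\gamma}$ and $\partial_{II}\mathcal{M}_{\gamma}$. The $\partial_{II}$ contribution is zero by hypothesis: since $[\omega]\cdot\tilde{\gamma}\neq 0$ for every lifting $\tilde{\gamma}\in H_2(X,\mathbb{Z})$ of $\gamma$, no such $\tilde{\gamma}$ can be represented by a holomorphic rational curve in any member of the twistor family $\mathfrak{X}^{[\omega]}_s$, so the stratum (\ref{5}) is empty. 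The $\partial_I$ contribution is handled by the computation of \cite{L4}: the correspondence on the fibre product (\ref{912}) carries a factor of $\langle\gamma_1,\gamma_2\rangle$ coming from the intersection pairing of boundary classes in $H_1(L)$, and since $L$ is a smooth rational sphere we have $H_1(L)=0$, so this factor vanishes.

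For path-independence, fix endpoints $[\omega_0],[\omega_1]$ in the same connected component of the complement of the valid hyperplanes and choose a path $[\omega_t]$ joining them inside this component. Applying the same Stokes argument to the $2$-parameter family $\bigcup_{t\in[0,1]}\mathcal{M}_{\gamma}(\mathfrak{X}^{[\omega_t]}_s,L)$ expresses $\tilde{\Omega}^{[\omega_1]}(s,\gamma)-\tilde{\Omega}^{[\omega_0]}(s,\gamma)$ as the sum of a $\partial_I$-type and a $\partial_{II}$-type contribution along the path. The $\partial_I$ contribution again vanishes by the $\langle\gamma_1,\gamma_2\rangle=0$ argument, and the $\partial_{II}$ contribution is empty because $[\omega_t]$ avoids every valid hyperplane along the path. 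Hence $\tilde{\Omega}^{[\omega]}(s,\gamma)$ is locally constant, and therefore constant, on each connected component.

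The main obstacle is technical rather than conceptual: the argument rests on the construction and functoriality of the Kuranishi pseudo-isotopy together with the Stokes formula for smooth correspondences \cite{FOOO}\cite{F1}, and on checking that both are compatible with the coherent orientation from Subsection \ref{88} induced by the relative spin structure $V=\mathcal{O}_{K3}(L)$. One must also verify that the identifications of $\partial_I$ as the fibre product (\ref{912}) and of $\partial_{II}$ as $\mathcal{M}^{cl}_{\tilde{\gamma},1}(\mathfrak{X}^{[\omega]})_{ev}\times L$ from (\ref{5}) remain globally coherent across the pseudo-isotopies, so that the cited vanishings can be invoked uniformly at every point. Once this bookkeeping is in place, the proposition follows directly from equation (\ref{911}) and its $2$-parameter analogue.
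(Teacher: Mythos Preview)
Your proposal is correct and follows exactly the paper's own argument, which is laid out in the paragraphs immediately preceding the proposition: pseudo-isotopy plus the Stokes formula for smooth correspondences reduces the difference to $\partial_I$ and $\partial_{II}$ contributions, with $\partial_{II}$ empty by the hypothesis on $[\omega]\cdot\tilde{\gamma}$ and $\partial_I$ killed by the factor $\langle\gamma_1,\gamma_2\rangle$ from \cite{L4}, which vanishes since $H_1(L)=0$. The only addition you make beyond the paper is the explicit remark that $\langle\gamma_1,\gamma_2\rangle$ is a pairing of boundary classes in $H_1(L)$, which is a helpful clarification the paper leaves implicit.
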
    
    
  \subsection{Wall-Crossing of the Discs Invariants}
In this section, we want to study how does the invariants $\Omega^{[\omega]}(s,\gamma)$ changes when we vary the K\"ahler class $[\omega]$. More generally, we assume there is a $1$-parameter family of hyperK\"ahler structures $(s_t,[\omega_t])$ on the K3 surface $X$. 

Since there are only finitely many valid hyperplanes for a fixed relative class $\gamma$, we may just consider the case when there is only a single valid hyperplane labeled by $\tilde{\gamma}$. 
 
\begin{thm} \label{15}
   The wall-crossing formula for crossing the hyperplane labeled by $\tilde{\gamma}$ is given by 
      \begin{align} \label{888}
             \Delta\tilde{\Omega}(\gamma):= \tilde{\Omega}^{[\omega_1]}(s_1,\gamma)-\tilde{\Omega}^{[\omega_0]}(s_0,\gamma)= \pm  ([L]\cdot \tilde{\gamma}) GW_{red}(\tilde{\gamma}),
      \end{align} where $([L]\cdot \tilde{\gamma})$ is the intersection pairing in $\mathbb{L}_{K3}$ and $GW_{red}(\tilde{\gamma})$ denotes the reduced Gromov-Witten invariants associated to $\tilde{\gamma}$. The sign in (\ref{888}) is given by 
      \begin{align}
        \frac{1}{2}\big( \mbox{sgn}(\omega_1\cdot \tilde{\gamma})-\mbox{sgn}(\omega_0\cdot \tilde{\gamma}) \big).
      \end{align}       
\end{thm}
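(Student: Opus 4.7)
The plan is to apply the Stokes-type formula for smooth correspondences to the two-parameter family moduli space $\bigcup_{t\in[0,1]}\mathcal{M}_{\gamma}(\mathfrak{X}^{[\omega_t]}_{s_t},L)$, as already set up in the paragraph preceding Proposition \ref{10}. By that computation, the difference $\tilde{\Omega}^{[\omega_1]}(s_1,\gamma)-\tilde{\Omega}^{[\omega_0]}(s_0,\gamma)$ equals the smooth correspondence applied to the boundary of this two-parameter family. The type I contribution was already shown to vanish (via the $\langle\gamma_1,\gamma_2\rangle$ factor), so the only remaining contribution is from the type II boundary, which is nonempty only when the path $t\mapsto [\omega_t]$ crosses the valid hyperplane $W_{\tilde{\gamma}}=\{[\omega]\cdot\tilde{\gamma}=0\}$. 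If no such crossing occurs, both sides of \eqref{888} vanish; hence we may assume a single transverse crossing.

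At the crossing the type II boundary is identified, via the inclusion \eqref{5}, with the fibered product $\mathcal{M}^{cl}_{\tilde{\gamma},1}(\mathfrak{X}^{[\omega]})\; {}_{ev}\!\times_{\iota} L$, where $\iota:L\hookrightarrow X$. The smooth correspondence on this locus is exactly the pushforward along the trivial map of this fibered product. Passing through the fibered product description, this is equivalent to integrating $ev^{*}(\mathrm{PD}[L])$ over the virtual fundamental cycle of $\mathcal{M}^{cl}_{\tilde{\gamma},1}(\mathfrak{X}^{[\omega]})$. Because the $S^1$-family virtual cycle is (up to the normalization already fixed in Section \ref{88}) the reduced virtual cycle of the $J_\vartheta$-holomorphic rational curves at the unique $\vartheta$ allowed by the period condition, the divisor axiom in reduced Gromov--Witten theory \cite{MP}\cite{L} yields
\begin{equation*}
  \int_{[\mathcal{M}^{cl}_{\tilde{\gamma},1}(\mathfrak{X}^{[\omega]})]^{vir}} ev^{*}\mathrm{PD}[L] \;=\; ([L]\cdot\tilde{\gamma})\,GW_{red}(\tilde{\gamma}).
\end{equation*}
This accounts for the factor $([L]\cdot\tilde{\gamma}) GW_{red}(\tilde{\gamma})$ in \eqref{888}.

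The sign is determined by how the $S^1$-family wall is traversed. Along the path $t\mapsto(s_t,[\omega_t])$ the type II boundary appears precisely at the value of $t$ where $[\omega_t]\cdot\tilde{\gamma}$ changes sign; the induced orientation on the boundary (from the outward normal in the $t$-direction combined with the $\mathbb{R}$-factor used in Section \ref{88} to orient $\mathcal{M}_{\gamma}(\mathfrak{X}^{[\omega]}_s,L)$) reverses when the direction of crossing reverses. A bookkeeping of the two possible directions, and the case of no crossing, is packaged exactly by $\tfrac{1}{2}\bigl(\mathrm{sgn}(\omega_1\cdot\tilde{\gamma})-\mathrm{sgn}(\omega_0\cdot\tilde{\gamma})\bigr)$.

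The principal obstacle, and the one demanding the most care, is Step 2: justifying that the Kuranishi/smooth-correspondence pushforward along the type II stratum truly agrees with the closed reduced virtual integral weighted by $ev^{*}\mathrm{PD}[L]$. This requires matching the Kuranishi structure induced on $\partial_{II}\mathcal{M}_{\gamma}(\mathfrak{X}^{[\omega]}_s,L)$ by the disc-degeneration gluing with the one on $\mathcal{M}^{cl}_{\tilde{\gamma},1}(\mathfrak{X}^{[\omega]})\times_{X}L$ coming from the closed theory, including a comparison of obstruction bundles (the obstruction for the collapsed disc versus the obstruction for the sphere plus the normal direction of $L$ in $X$). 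A secondary delicate point is the sign, which must be traced through the relative spin structure convention fixed by $V=\mathcal{O}_{K3}(L)$ in Section \ref{88} and the orientation of the twistor $S^1$; this is what determines the overall sign in \eqref{888} beyond the wall-crossing direction factor.
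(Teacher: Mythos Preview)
Your proposal is correct and follows essentially the same approach as the paper: Stokes for smooth correspondences on the two-parameter family, vanishing of the type~I boundary, identification of the type~II boundary with $\mathcal{M}^{cl}_{\tilde{\gamma},1}\times_X L$, and the divisor axiom to extract $([L]\cdot\tilde{\gamma})\,GW_{red}(\tilde{\gamma})$. The only notable difference is in the sign: the paper makes the sign explicit by regarding the $(\vartheta,t)$-family as a surface $\mathcal{J}$ in the period domain (via the holomorphic $2$-form $\omega_t-i\,\mathrm{Re}(e^{i\vartheta}\Omega)$), showing $\mathcal{J}$ meets the divisor $D_{\tilde{\gamma}}$ transversally, and then choosing the path with $\frac{d}{dt}\omega_t|_{t=0}=-\mathrm{Re}\,\Omega$ so that the oriented tangent plane of $\mathcal{J}$ matches the twistor orientation, thereby reading off the intersection number as $+1$ when $[\omega_t]\cdot\tilde{\gamma}$ is decreasing; your ``bookkeeping'' paragraph gestures at this but does not actually carry it out.
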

 
\begin{proof}   
 Assume that there is an $1$-parameter family of K\"ahler class $[\omega_t], t\in [0,1]$ goes across a single valid hyperplane labeled by $\tilde{\gamma}$ transversally at $t=t_0$. We will follow the argument in \cite{F2} that there exists a Kuranishi structure for the $1$-parameter family of moduli space of pseudo-holomorphic discs. Moreover, the Kuranishi structure is compatible on the boundaries in the sense that 
    \begin{align}\label{900}
                \partial
               \mathcal{M}_{0,\gamma}&(\mathfrak{X},L)=\bigcup_{\tilde{\gamma}:i_*(\tilde{\gamma})=\gamma}
                                                       \big( \mathcal{M}^{cl}_{1,\tilde{\gamma}} \underset{\frak{X}}{\times}   (L\times S^1_{\vartheta})\big)\notag \\
               &\cup \bigcup_{\substack{\gamma_1+\gamma_2=\gamma,\\ Z_{\gamma_1}/Z_{\gamma_2}\in \mathbb{R}_{>0} }}(\mathcal{M}_{1,\gamma_1}(\mathfrak{X},L)\; {}_{(ev_0,ev_{\vartheta})} \! \times_{(ev_0,ev_{\vartheta})}
               \mathcal{M}_{1,\gamma_2}(\mathfrak{X},L))/\mathbb{Z}_2
            \end{align}  
 the Kuranishi structure on the both sides of (\ref{900}) coincides. Apply the smooth correspondence to both sides of (\ref{900}) and together with the Stokes theorem of smooth correspondence \cite{F1}, we have 
  \begin{align} \label{6} 
     \tilde{\Omega}^{[\omega_1]}(s,\gamma)-\tilde{\Omega}^{[\omega_0]}(s,\gamma)= n(L;\tilde{\gamma};\mathcal{J}),
  \end{align} where $\mathcal{J}$ is a $2$-parameter family of complex structures of K3 surface with parameters $(\vartheta,t)$. If we use $GW_{red}(\tilde{\gamma})$ to denote the reduced Gromov-Witten invariants which only depends on $\langle \tilde{\gamma},\tilde{\gamma}\rangle$. The right hand side of (\ref{6}) is defined to be 
     \begin{align} \label{7}
        n(L;\tilde{\gamma};\mathcal{J}):=&[L]\cap ev_*([\mathcal{M}_{\tilde{\gamma},1}(\mathfrak{X}^{[\omega_t]}_s)]^{vir})\in \mathbb{Q} \notag \\
        &= (D_{\tilde{\gamma}}\cdot \mathcal{J})([L]\cdot \tilde{\gamma}) GW_{red}(\tilde{\gamma}).
     \end{align}
       Here the second equality comes from the compatibility of forgetful map
       \begin{align*}
         \mathfrak{forget}: \mathcal{M}_{\tilde{\gamma},1}(\mathfrak{X}_s)\rightarrow \mathcal{M}_{\tilde{\gamma},0}(\mathfrak{X}_s),
       \end{align*} and divisor axiom for closed reduced Gromov-Witten invariants \cite{BL}\cite{L}.
     The notation $ (D_{\tilde{\gamma}}\cdot \mathcal{J})$ denotes the local topological of the surface $\mathcal{J}$.

From our definition, $\mathcal{J}$ has the orientation $\{\frac{\partial}{\partial \vartheta}, \frac{\partial}{\partial t}\}$. The intersection of $\mathcal{J}$ and $D_{\tilde{\gamma}}$ is transverse. Indeed, the complex structure parametrized by $(\vartheta,t)\in \mathcal{J}$ is given by the holomorphic $2$-form $\omega_t-i\mbox{Re}(e^{i\vartheta}\Omega)$. From the definition of $D_{\tilde{\gamma}}$, it suffices to show that $\frac{d}{dt}[\omega_t]|_{t=0}. \tilde{\gamma}\neq 0$, which we can achieved by choose the path $[\omega_t]$ transverse to the valid hyperplane. Since the intersection is transversal, the topological number is just $\pm 1$. Since the jump of the invariants are the same as long as the path we choose is transverse to the valid hyperplane from Proposition \ref{10}, we may choose the family of K\"ahler class $[\omega_t]$ such that $\frac{d}{dt}\omega_t|_{t=0}=-\mbox{Re}\Omega$ without loss of generality. With this choice, the orientated tangent plane of $\mathcal{J}$ at the intersection point with $D_{\tilde{\gamma}}$ coincide with that of the twistor line from the hyperK\"ahler pair $(\omega,\Omega)$. In particular, if we choose $[\omega_t]$ such that $([\omega_t]\cdot \tilde{\gamma})$ is decreasing then the topological intersection would be just $1$, which verify the sign in (\ref{888})
\end{proof}
   

\subsection{The Vanishing Theorem}
Let $\gamma\in H_2(X,L)$ and $\tilde{\gamma}\in H_2(X,\mathbb{Z})$ such that $\iota(\tilde{\gamma})=\gamma$\footnote{Here we mean the identity holds for the corresponding class under the identification of marking.}. There exists a non-empty divisor $\mathcal{D}_{\tilde{\gamma}}\subseteq \mathcal{M}'_{[L]}$. For any 
$s_0\in \mathcal{D}_{\tilde{\gamma}}$ and for any choice of K\"ahler class $[\omega]$, any holomorphic discs in $\mathfrak{X}^{[\omega]}_{s_0}$ with boundary on $L$ and relative homology class $\gamma$ has symplectic area 
    \begin{align*}
      |\int_{\gamma}\Omega_{s_0}|=|\int_{\tilde{\gamma}}\Omega_{s_0}|=0.
    \end{align*} In other words, there exists an obvious cohomological constraint of existence of holomorphic discs in the relative class $\gamma$. In particular, we have the vanishing of certain open Gromov-Witten invariants:
    \begin{thm} \label{2001}
    Let $\tilde{\gamma}\in \mathbb{L}_{K3}$ and $\iota(\tilde{\gamma})=\gamma$. Assume that $s_0\in \mathcal{D}_{\tilde{\gamma}}$. Then for any choice of the K\"ahler class $[\omega]$, we have
    \begin{align*}
       \tilde{\Omega}^{[\omega]}(s_0,\gamma)=0.
    \end{align*} 
    \end{thm}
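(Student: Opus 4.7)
The plan is to reduce the statement to showing that the underlying moduli space $\mathcal{M}_{\gamma}(\mathfrak{X}^{[\omega]}_{s_0},L)$ is empty whenever $\gamma\neq 0$. If so, any Kuranishi structure on it is empty as well, so the virtual fundamental cycle and hence $\tilde{\Omega}^{[\omega]}(s_0,\gamma)$ vanishes automatically, independent of the choices needed to construct the perturbed multisection. This approach avoids any delicate analysis of wall-crossing or Kuranishi perturbations at $s_0$, and in particular circumvents the question of whether $s_0$ happens to lie on some $W'_{\gamma_1,\gamma_2}$.

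The first step is to show that the central charge vanishes: $Z_{\gamma}(s_0)=0$. Since $s_0\in\mathcal{M}'_{[L]}$, the class $[L]$ is represented by a holomorphic rational curve at $s_0$, so $\int_{[L]}\Omega_{s_0}=0$. By hypothesis $s_0\in\mathcal{D}_{\tilde{\gamma}}=\mathcal{M}'_{[L]}\cap\mathcal{M}_{\tilde{\gamma}}$, so also $\int_{\tilde{\gamma}}\Omega_{s_0}=0$. The exact sequence (\ref{46}) shows that any two lifts of $\gamma\in H_2(X,L)$ to $H_2(X,\mathbb{Z})$ differ by an integer multiple of $[L]$, so
\[
Z_\gamma(s_0)=\int_{\gamma}\Omega_{s_0}=\int_{\tilde{\gamma}}\Omega_{s_0}=0.
\]

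The second step is an area estimate. By Proposition \ref{49}, the symplectic form on $X_{\vartheta,s_0}$ is $\omega_\vartheta=-\mathrm{Im}(e^{-i\vartheta}\Omega_{s_0})$, and any stable pseudo-holomorphic disc of relative class $\gamma$ in $X_{\vartheta,s_0}$ has total symplectic area
\[
\int_\gamma\omega_\vartheta=-\mathrm{Im}\bigl(e^{-i\vartheta}Z_\gamma(s_0)\bigr)=0.
\]
The total area decomposes as a sum over the (finitely many) disc, sphere, and bubble components; each summand is non-negative and equals zero only if that component is a constant map. Hence every component is constant, forcing $\gamma=0$. This contradicts the implicit assumption $\gamma\neq 0$, so $\mathcal{M}_{\gamma}(\mathfrak{X}^{[\omega]}_{s_0},L)=\emptyset$ and the invariant vanishes for every choice of K\"ahler class $[\omega]$.

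There is no serious obstacle here — the proof is essentially an area-positivity argument combined with the observation that both $[L]$ and $\tilde{\gamma}$ are of type $(1,1)$ at $s_0$. The only minor subtlety is ensuring that passing from smooth discs to stable discs with sphere/disc bubbles does not spoil the area vanishing, which is handled by the additivity of symplectic area over components and non-negativity on each component.
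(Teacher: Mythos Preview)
Your argument is correct and follows the same route as the paper: the paper's justification (the paragraph immediately preceding the statement) observes that for $s_0\in\mathcal{D}_{\tilde{\gamma}}$ the symplectic area $|\int_\gamma\Omega_{s_0}|=|\int_{\tilde{\gamma}}\Omega_{s_0}|$ vanishes, so the relevant moduli space is empty by a cohomological obstruction. Your write-up simply fills in the details---making the passage from $Z_\gamma(s_0)=0$ to $\int_\gamma\omega_\vartheta=0$ explicit via Proposition~\ref{49}, and spelling out why area positivity on each component of a stable disc forces the map to be constant---but the idea is identical.
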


The lemma also provides us the vanishing of some invariants for the K3 surface near $X_{s_0}$.
\begin{prop}
There exists a neighborhood $U$ of $s_0$ in $\mathcal{M}'_{[L]}$ such that any point $s_1\in U$, there is a path of K\"ahler class $[\omega_t]$ such that it doesn't hit any valid hyperplanes. In particular, we have 
   \begin{align*}
      \tilde{\Omega}^{[\omega_1]}(\gamma,s_1)=\tilde{\Omega}^{[\omega_0]}(\gamma,s_0)=0.
   \end{align*}
\end{prop}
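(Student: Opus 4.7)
The plan is to use the vanishing at $s_0$ provided by Theorem \ref{2001} as the base case, and propagate it to nearby $s_1$ by constructing a $1$-parameter family $(s_t,[\omega_t])$ along which no type II wall-crossing occurs. First I would choose a basepoint $[\omega_0]$ at $s_0$: a K\"ahler class on $X_{s_0}$ satisfying $[\omega_0]\cdot(\tilde{\gamma}+k[L])\neq 0$ for every integer $k$. Such an $[\omega_0]$ exists generically, because the map $k\mapsto [\omega_0]\cdot\tilde{\gamma}+k([\omega_0]\cdot[L])$ is a non-trivial affine function of $k$ as soon as $[\omega_0]\cdot[L]\neq 0$, and its unique real zero can be made non-integer by an arbitrarily small perturbation of $[\omega_0]$ inside the K\"ahler cone of $X_{s_0}$. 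By Proposition \ref{10} the invariant $\tilde{\Omega}^{[\omega_0]}(s_0,\gamma)$ is then well-defined, and Theorem \ref{2001} yields $\tilde{\Omega}^{[\omega_0]}(s_0,\gamma)=0$.

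Next I would construct the path. The $(1,1)$-classes over $\mathcal{M}'_{[L]}$ assemble into a smooth real rank-$20$ subbundle of the trivial bundle $\mathcal{M}'_{[L]}\times H^2(X,\mathbb{R})$, inside which the K\"ahler cone is open. For $s_1$ in a sufficiently small neighborhood $U$ of $s_0$, I would pick a smooth path $s_t$ from $s_0$ to $s_1$ in $U$ and, via a local smooth trivialization of this bundle, a continuous family $[\omega_t]$ of K\"ahler classes on $X_{s_t}$ with $[\omega_t]|_{t=0}=[\omega_0]$. Recall from the paragraph following (\ref{5}) that the divisibility bound on $\tilde{\gamma}+k[L]$ leaves only finitely many integers $k$ for which a valid hyperplane occurs, so the condition $[\omega_t]\cdot(\tilde{\gamma}+k[L])\neq 0$ for all valid $k$ is a finite collection of open conditions in $(s,[\omega])$. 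Since these hold at $t=0$ by the first step, after shrinking $U$ they persist for every $t\in[0,1]$, and the path avoids every valid hyperplane.

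With this path in hand, I would apply the wall-crossing analysis of Theorem \ref{15} to the family $(s_t,[\omega_t])$. Since no valid hyperplane is crossed, the type II boundary contribution in the Stokes identity (\ref{911}) is empty along the path, hence $\tilde{\Omega}^{[\omega_t]}(s_t,\gamma)$ is constant in $t$. Combined with $\tilde{\Omega}^{[\omega_0]}(s_0,\gamma)=0$ this gives $\tilde{\Omega}^{[\omega_1]}(s_1,\gamma)=0$, as required.

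The main obstacle is that $s_0\in\mathcal{D}_{\tilde{\gamma}}$ itself lies on the relevant Noether--Lefschetz locus: the class $\tilde{\gamma}$ is genuinely of type $(1,1)$ at $s_0$, so the hyperplane $[\omega]\cdot\tilde{\gamma}=0$ meets the K\"ahler cone of $X_{s_0}$ as a real codimension-one subspace rather than degenerating trivially. This rules out any naive ``perturb $s$ to leave $\mathcal{D}_{\tilde{\gamma}}$ first'' argument and forces one to choose $[\omega_0]$ simultaneously off $[\omega]\cdot\tilde{\gamma}=0$ and off all of its integer translates in the $[L]$-direction. The reduction to avoiding finitely many integer roots of a single linear function in $k$ described above is precisely what makes this simultaneous choice possible.
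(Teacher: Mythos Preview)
Your proposal is correct and follows essentially the same route as the paper: use the finiteness of valid hyperplanes, the openness of the K\"ahler cone under small deformation of complex structure, and the cobordism argument to propagate the vanishing from $s_0$ to nearby $s_1$. The only cosmetic difference is the order of operations---you fix $[\omega_0]$ off all hyperplanes first and then shrink $U$, whereas the paper first builds a path of K\"ahler classes and then perturbs it off the finitely many hyperplanes---but the content is identical.
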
   
\begin{proof}
  There are only finitely many valid hyperplanes which are a priori determined cohomologically. Thus, the first statement follows from the fact that any local complex deformation of a K\"ahler manifold is still K\"ahler. Thus, there exists a neighborhood $U\ni s_0$ such that every point $s_1\in U$ there is a $1$-parameter family of K\"ahler class $[\omega_t]$ of $X_{s_t}$, $t\in[0,1]$. Since the K\"ahler cone of $X_{s_t}$ is open in $H^{1,1}(X_{s_t})$, there exists a open neighborhood of $\{[\omega_t]\}$ in the union of $H^{1,1}$. Therefore, one can perturb it to avoid finitely many valid hyperplanes. The second statement follows from the first part and the cobordism argument.
\end{proof}

 Together with the Theorem \ref{15}, we have a closed formula for the reduced open Gromov-Witten invariants:
 \begin{thm}
     Let $\gamma\in H_2(X,L)$, then
     \begin{align} \label{1005}
       \tilde{\Omega}^{[\omega]}(\gamma)=\sum_{\tilde{\gamma}:\iota(\tilde{\gamma})=\gamma} \pm ([L]\cdot \tilde{\gamma}) GW_{red}(\tilde{\gamma}), 
     \end{align} where the sign is the one given by Theorem \ref{15}. In particular, the reduced open Gromov-Witten invariants are combination of closed reduced Gromov-Witten invariants. 
     
 \end{thm}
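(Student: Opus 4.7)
The plan is to reduce the computation of $\tilde{\Omega}^{[\omega]}(s,\gamma)$ to the vanishing locus provided by Theorem \ref{2001}, and then accumulate the jumps supplied by the wall-crossing formula of Theorem \ref{15}. Fix any lift $\tilde{\gamma}_{\star}\in H_2(X,\mathbb{Z})$ of $\gamma$. Since $\mathcal{D}_{\tilde{\gamma}_{\star}}\subseteq \mathcal{M}'_{[L]}$ is nonempty (by the period-domain argument used earlier in this section to define $\mathcal{D}_{\tilde{\gamma}}$), I would choose a basepoint $s_0\in \mathcal{D}_{\tilde{\gamma}_{\star}}$ together with any K\"ahler class $[\omega_0]$ on $X_{s_0}$. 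Theorem \ref{2001} then yields $\tilde{\Omega}^{[\omega_0]}(s_0,\gamma)=0$, giving us a natural origin for the computation.

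Next I would connect $(s_0,[\omega_0])$ to $(s,[\omega])$ by a smooth $1$-parameter family $(s_t,[\omega_t])$, $t\in[0,1]$, in the combined parameter space of hyperK\"ahler structures on the marked K3 surface. Using Proposition \ref{10} together with a standard transversality argument, I would perturb the path so that it meets each valid hyperplane transversally and crosses at most one such wall at a time. By the discussion following equation (\ref{911}), the type I disc bubbling contributions along the family vanish identically, so only the type II boundary is relevant to the jumps. Applying Theorem \ref{15} at each wall crossing, traversal of the valid hyperplane labelled by $\tilde{\gamma}$ contributes
\[
\tfrac{1}{2}\bigl(\mathrm{sgn}(\omega_{t_{+}}\cdot\tilde{\gamma})-\mathrm{sgn}(\omega_{t_{-}}\cdot\tilde{\gamma})\bigr)([L]\cdot\tilde{\gamma})GW_{red}(\tilde{\gamma})
\]
to $\tilde{\Omega}$. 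Telescoping these jumps across $t\in[0,1]$ and using the vanishing at the basepoint yields
\[
\tilde{\Omega}^{[\omega]}(s,\gamma)=\sum_{\tilde{\gamma}:\,\iota(\tilde{\gamma})=\gamma}\pm([L]\cdot\tilde{\gamma})GW_{red}(\tilde{\gamma}),
\]
with each $\pm$ sign prescribed as in Theorem \ref{15}.

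Finiteness of the sum is automatic from the self-intersection bound $(\tilde{\gamma}_{\star}+k[L])^2\geq -2$ combined with the divisibility constraint already used in the discussion following equation (\ref{5}); together these confine $k$ to a finite range, so only finitely many lifts produce nonzero contributions. The main obstacle is the transversality step: one must arrange the connecting path so that it never meets two valid hyperplanes simultaneously, avoids all higher-codimension strata of the compactified moduli space, and incurs no stray type I boundary contributions. Making this rigorous requires packaging the family data into a Kuranishi structure analogous to the one constructed in the proof of Theorem \ref{15}. Once such a transverse path exists, Proposition \ref{10} guarantees that the telescoped sum depends only on the chambers of the endpoints, and formula (\ref{1005}) follows.
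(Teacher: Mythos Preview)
Your approach is correct and matches the paper's own argument: the paper derives this theorem by combining the vanishing result of Theorem \ref{2001} (providing a basepoint where $\tilde{\Omega}=0$) with repeated application of the wall-crossing formula of Theorem \ref{15}, exactly as you outline. Your discussion of finiteness via the self-intersection bound and of the transversality issues for the connecting path is in fact more detailed than the paper's treatment, which simply asserts that the formula follows ``together with Theorem \ref{15}'' and notes the finiteness afterward.
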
 Notice that the terms on the right hand side of (\ref{1005}) are non-zero only when $\tilde{\gamma}^2\geq -2$ and there are only finitely many such $\tilde{\gamma}$.

\subsection{Multiple Cover Formula and Integrality Conjecture}
In \cite{YZ}(See also \cite{B2}), Yau and Zaslow derived the following intriguing Yau-Zaslow formula:
    \begin{align}\label{8}
     \frac{q}{\Delta(q)}=\prod_{k>0}\frac{1}{(1-q^k)^{24}}=&\sum_{d\geq 0} G_d q^d \\    
                                              =1+&24q+324q^2+3200q^3+25650q^4+176256q^5+\cdots .            \nonumber    
    \end{align} The integers $G_d$ exactly count the number of nodal rational curves in a generic algebraic K3 surface of genus $d$ and in the linear system of its natural polarization. Notice that the curve classes in the above linear system are always primitive. The formula (\ref{8}) motivates the study of the general theory of reduced Gromov-Witten invariants on K3 surfaces \cite{BL}\cite{L}\cite{MP}. 
    
    When the curve classes is non-primitive, the genus zero reduced Gromov-Witten invariants are more complicated to compute. However, people find out that the genus zero reduced Gromov-Witten invariants can always be computed via the integers $G_d$ the self-intersection number of the curve classes (See also \cite{MP} for multiple cover formula for the higher genus reduced Gromov-Witten invariants):
    \begin{thm} \cite{GV}\cite{MP} Let $\mathbb{L}_{K3}$ be the K3 lattice. Given a curve class $\beta\in \mathbb{L}_{K3}$ and denote the genus zero reduced Gromov-Witten invariant associated to the Poincar\'e dual of $\beta$ by $n_{\beta}$. Then 
          \begin{align}\label{16}
              n_{\beta}=\sum_d \frac{1}{d^3}G_{\frac{1}{2}(\frac{\beta}{d})^2+1}.   
          \end{align} Here $(\beta/d)^2$ denotes the natural extension of self-intersection pairing on $\mathbb{L}_{K3}\otimes \mathbb{R}$ and we set $G_k=0$ if $k$ is not an integer.
    \end{thm}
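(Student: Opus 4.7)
The plan is to combine the strong deformation invariance of reduced Gromov-Witten invariants on K3 surfaces, the Yau-Zaslow formula for primitive classes, and a local computation of the multiple cover contribution of a rigid rational curve in the reduced theory. The net effect is that only two pieces of data about $\beta$ enter the answer --- its square and its divisibility --- and that the full dependence on divisibility is captured by the Aspinwall-Morrison-type factor $1/d^3$.

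First I would show that $n_\beta$ depends only on the pair $(\beta^2, \mathrm{div}(\beta))$. By the global Torelli theorem, any two marked K3 surfaces carrying a $(1,1)$-class of fixed square and fixed divisibility can be joined by a path in the period domain along which the class remains of type $(1,1)$, and the reduced obstruction theory is invariant along such algebraic families. Next, for a primitive class $\beta$ with $\beta^2 = 2h-2$, the Yau-Zaslow formula (proved rigorously by Beauville, Bryan-Leung, and Fantechi-G\"ottsche-van Straten) gives $n_\beta = G_h$; this is the $d=1$ term of (\ref{16}). For the non-primitive case $\beta = m\beta_0$ with $\beta_0$ primitive, I would degenerate to a K3 on which $\beta_0$ is represented by a rigid smooth rational curve $C \cong \mathbb{P}^1$ (possible when $\beta_0^2 = -2$), or, for general square, to an elliptic K3 on which $\beta_0$ can be written as a sum of the zero section and a fibre class. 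The stable maps contributing in class $\beta$ split, up to deformation, into honest maps onto curves of class $\beta/d$ (whose numerical count is $G_{\frac{1}{2}(\beta/d)^2+1}$ by Step~2) and $d$-fold covers of such curves; the multiple cover contribution of a rigid $(-2)$-curve in the reduced theory is a local equivariant calculation that returns exactly $1/d^3$.

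The main obstacle is making the multiple cover analysis rigorous and uniform across all classes, not merely those representable by $(-2)$-curves, and in particular guaranteeing that the ``honest versus multiple cover'' split I use at the degeneration point extracts the right virtual count. The cleanest route is via the Gromov-Witten/stable pairs correspondence of Pandharipande-Thomas for K3-fibred threefolds: stable pair invariants on a K3 surface count embedded curves and receive no multiple cover contribution, so the generating series of stable pair invariants matches the $G_k$ directly, and inverting the GW/PT transformation produces (\ref{16}). An alternative that sidesteps explicit multiple cover bookkeeping is the Noether-Lefschetz approach of Maulik-Pandharipande, which identifies the generating function of the $n_\beta$ with a quasi-modular form; the Yau-Zaslow primitive case then pins the form down uniquely and the action of Hecke operators forces (\ref{16}).
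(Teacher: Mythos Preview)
The paper does not supply a proof of this theorem: it is quoted as an external result, attributed to \cite{GV} and \cite{MP}, and then used as input for the open-Gromov-Witten multiple cover formula (Theorem~\ref{14}). So there is no ``paper's own proof'' to compare against.

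That said, your sketch is broadly in line with the arguments actually given in the cited literature. The reduction to the pair $(\beta^2,\operatorname{div}(\beta))$ via deformation invariance and Torelli is standard and correct, and the primitive case is indeed Yau--Zaslow/Bryan--Leung. Of your two proposed endgames, the Noether--Lefschetz/modularity route is precisely what Maulik--Pandharipande do: the generating series of reduced invariants is identified with a modular object, the primitive case fixes it, and the multiple cover formula drops out. The GW/stable-pairs route you mention is the later Pandharipande--Thomas approach to the full KKV conjecture. Either of these is a legitimate path to (\ref{16}).

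The weak point in your write-up is the middle step, the direct ``honest versus multiple cover'' degeneration argument. Saying that stable maps ``split, up to deformation'' into embedded maps in class $\beta/d$ and $d$-fold covers, and that the local cover contribution is $1/d^3$, is not a proof: on an elliptic K3 the image curves are far from rigid rational curves, the relevant moduli spaces have excess dimension, and the virtual contributions do not factor in the na\"ive way you describe. The local $1/d^3$ computation you invoke is the Aspinwall--Morrison calculation for a rigid $\mathbb{P}^1$ in a Calabi--Yau threefold; transplanting it to the \emph{reduced} obstruction theory on a K3 requires a separate argument (one has to identify the reduced obstruction bundle on the cover moduli and integrate its Euler class), and even then it only treats $\beta_0^2=-2$. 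You correctly flag this as ``the main obstacle'', but as written that paragraph is heuristic rather than a proof; the actual argument in \cite{MP} bypasses it entirely via modularity. If you want a self-contained account, drop the degeneration paragraph and expand the Noether--Lefschetz route.
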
 We will call (\ref{16}) the multiple cover formula for genus zero reduced Gromov-Witten invariants. First, the formula (\ref{16}) indicates that the genus zero reduced Gromov-Witten invariant $n_{\beta}$ only depends on $\langle \beta,\beta\rangle$. Secondly, all the reduced Gromov-Witten invariants after a mysterious transformation involves $1/d^3$ will produce integer invariants. It is interesting to understand the geometric (or enumerative) meaning of these integer-valued invariants.

    Notice that the magic number $1/d^3$ for the multiple cover formula of genus zero reduced Gromov-Witten invariants is the same as the one appears in the multiple cover formula (\ref{998}) for genus zero Gromov-Witten invariants in Calabi-Yau $3$-folds. This is because the tangent-obstruction theory of reduced Gromov-Witten invariants on K3 surfaces is similar to that of Gromov-Witten theory on Calabi-Yau $3$-folds.

      Multiple cover formula for holomorphic discs is not well-studied mainly because there are not many cases the open Gromov-Witten invariants can be defined. The first one is computed in the holomorphic discs in the total space of $\mathcal{O}_{\mathbb{P}^1}(-1)\oplus \mathcal{O}_{\mathbb{P}^1}(-1)$, which admits a real structure and a torus action \cite{KS3}\cite{GZ}. 
      It is speculated that there is also a multiple cover formula for disc counting in Calabi-Yau $3$-folds, with $1/d^3$ replaced by $1/d^2$ (up to a sign) \cite{FOOO}\cite{F2}. The general philosophy is that the reduced theory of K3 surfaces is similar to the theory on Calabi-Yau $3$-folds. Therefore, it is reasonable to expect that the reduced open Gromov-Witten invariants on K3 surfaces share similar multiple cover formulas. 
      \begin{conj} \cite{L4} \label{13} For any choice of the K\"ahler class $[\omega]$, there exists integers $\{\Omega^{[\omega]}(\gamma)\in \mathbb{Z}\}$ such that 
         \begin{align*}
           \tilde{\Omega}^{[\omega]}(\gamma)=\sum_d \pm\frac{1}{d^2}\Omega^{[\omega]}(\frac{\gamma}{d}).
         \end{align*}
      \end{conj}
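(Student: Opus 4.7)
The plan is to combine the closed-form expression (\ref{1005}) for $\tilde{\Omega}^{[\omega]}(\gamma)$ with the Gathmann--Vafa--Maulik--Pandharipande multiple cover formula (\ref{16}) for genus zero reduced Gromov--Witten invariants, and then reorganize the resulting double sum so that the bilinearity of the intersection pairing produces the correct $1/d^2$ prefactor and integer coefficients.

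First, I would substitute (\ref{16}) into (\ref{1005}) and swap summation orders, then change variables to $\tilde{\gamma}' := \tilde{\gamma}/d$, which ranges over the lifts of $\gamma/d$ under $\iota$ (the inner sum is empty when $\gamma/d$ is not in $\Gamma$). Since $[L]\cdot(d\tilde{\gamma}')=d([L]\cdot\tilde{\gamma}')$, exactly one factor of $d$ cancels the $1/d^3$ coming from (\ref{16}), giving
\begin{align*}
\tilde{\Omega}^{[\omega]}(\gamma) = \sum_{d\geq 1}\frac{1}{d^2} \sum_{\tilde{\gamma}' : \iota(\tilde{\gamma}')=\gamma/d} \epsilon(d\tilde{\gamma}')\,([L]\cdot\tilde{\gamma}')\, G_{\frac{1}{2}(\tilde{\gamma}')^2+1},
\end{align*}
where $\epsilon(\tilde{\gamma})\in\{\pm 1\}$ denotes the sign prescribed by Theorem \ref{15} in (\ref{1005}). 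I would then define
\begin{align*}
\Omega^{[\omega]}(\gamma) := \sum_{\tilde{\gamma}:\ \iota(\tilde{\gamma}) = \gamma} \epsilon(\tilde{\gamma})\,([L]\cdot\tilde{\gamma})\, G_{\frac{1}{2}\tilde{\gamma}^2 + 1}.
\end{align*}
Integrality is immediate: $G_n\in\mathbb{Z}$ by Yau--Zaslow, $[L]\cdot\tilde{\gamma}\in\mathbb{Z}$ by integrality of the K3 lattice pairing, and $\epsilon=\pm 1$. The sum is finite because $G_{\tilde{\gamma}^2/2+1}=0$ once $\tilde{\gamma}^2 < -2$, while the lifts of $\gamma$ differ by integer multiples of $[L]$ and the self-intersection tends to $-\infty$ along this family, exactly as in the valid-hyperplane finiteness argument of Section 2.

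The main delicate step will be verifying the sign compatibility $\epsilon(d\tilde{\gamma}')=\epsilon(\tilde{\gamma}')$, without which the definition of $\Omega^{[\omega]}(\gamma/d)$ would itself depend on $d$ and the rearrangement would collapse. Tracing through the derivation of (\ref{1005}) from Theorems \ref{15} and \ref{2001}, one has $\mathcal{D}_{d\tilde{\gamma}'} = \mathcal{D}_{\tilde{\gamma}'}$, since the defining condition $\alpha^{\vee}(\tilde{\gamma}')\wedge\Omega_X=0$ is insensitive to positive rescaling; hence the vanishing base point supplied by Theorem \ref{2001} can be chosen jointly for both classes. Moreover $W_{d\tilde{\gamma}'}=W_{\tilde{\gamma}'}$ as hyperplanes, and $\mathrm{sgn}(\omega\cdot d\tilde{\gamma}')=\mathrm{sgn}(\omega\cdot\tilde{\gamma}')$ for $d>0$, so the contribution of each wall crossing from Theorem \ref{15} accumulated along a path from the base point to $([\omega],s)$ is the same for $\tilde{\gamma}'$ and $d\tilde{\gamma}'$. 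Granting this sign identification, the rearranged sum equals $\sum_{d\geq 1} (1/d^2)\,\Omega^{[\omega]}(\gamma/d)$, establishing Conjecture \ref{13}.
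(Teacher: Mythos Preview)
Your argument is correct and follows essentially the same route as the paper. Note, however, that the paper does not prove Conjecture~\ref{13} in full generality: it is stated as a conjecture from \cite{L4}, and the paper only establishes the rigid-boundary special case, Theorem~\ref{14}. Your argument likewise only proves this special case, since the closed-form expression (\ref{1005}) you invoke is derived precisely under the hypothesis that $L$ is a smooth rational curve.

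Within that setting, the two arguments are the same in substance. The paper's proof of Theorem~\ref{14} starts from the vanishing base point supplied by Theorem~\ref{2001} and checks that each individual wall-crossing jump $\Delta\tilde{\Omega}(\gamma)$ inherits the $1/d^2$ structure from the closed multiple cover formula (\ref{16}), pulling one factor of $d$ out of $[L]\cdot\tilde{\gamma}$ to convert $1/d^3$ to $1/d^2$. You instead apply (\ref{16}) directly to the already-assembled closed formula (\ref{1005}), swap the sums, and make the substitution $\tilde{\gamma}'=\tilde{\gamma}/d$; this is the same computation packaged globally rather than wall-by-wall. Your explicit definition of $\Omega^{[\omega]}(\gamma)$ and your verification of the sign compatibility $\epsilon(d\tilde{\gamma}')=\epsilon(\tilde{\gamma}')$ via $W_{d\tilde{\gamma}'}=W_{\tilde{\gamma}'}$ and $\mathrm{sgn}(\omega\cdot d\tilde{\gamma}')=\mathrm{sgn}(\omega\cdot\tilde{\gamma}')$ are more careful than the paper's somewhat terse treatment, but the underlying idea is identical.
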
Naively, $\Omega^{[\omega]}(\gamma)$ counts the immersed holomorphic discs in the K3 surfaces. The conjecture is verified for the Lefschetz thimbles in the Ooguri-Vafa space:
  \begin{thm}\cite{L4} Let $L_u$ be an elliptic fibre of the Ooguri-Vafa space $X$ and $\gamma\in H_2(X,L_u)$ represents the Lefschetz thimble. Its corresponding open Gromov-Witten invariant is independent of the choice of $[\omega]$ and is given by
       \begin{align*}
                \tilde{\Omega}(\gamma,L_u)=\begin{cases} \frac{(-1)^{d-1}}{d^2}& \text{, if $\gamma=  d\gamma_e$, $d\in \mathbb{Z}$}\\
                       0& \text{, otherwise}.
                                                                 \end{cases}
               \end{align*}In particular, it suggests that
       \begin{align*}
             \Omega(\gamma,L_u)=\begin{cases} 1& \text{if $\gamma= \pm\gamma_e,$}\\
                    0& \text{otherwise.}
                                                              \end{cases}
            \end{align*}                   
  \end{thm}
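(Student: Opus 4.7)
The plan is to exploit the explicit fibered structure of the Ooguri-Vafa space $\pi: X \to \mathbb{C}$, which has a single nodal fiber over $0$ and a compatible $S^1$-action rotating each fiber. First I would perform a hyperk\"ahler rotation so that $L_u = \pi^{-1}(u)$ becomes a special Lagrangian in some $X_\vartheta$; the Lefschetz thimble class $\gamma_e$ is then represented by a unique (up to $S^1$-rotation of the boundary marked point) immersed holomorphic disc $u_0: \mathbb{D} \to X_\vartheta$ whose image is the cylinder swept by the vanishing cycle over the segment $[0,u] \subset \mathbb{C}$. Existence and uniqueness can be checked directly from the Gibbons--Hawking ansatz presentation of the Ooguri-Vafa metric together with the $S^1$-symmetry constraint on $J_\vartheta$-holomorphic sections. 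Hence $\mathcal{M}_{\gamma_e}(\mathfrak{X}^{[\omega]}, L_u)$ is a single copy of $S^1$ and the primitive open invariant equals $\pm 1$.

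For the multiple-cover case $\gamma = d \gamma_e$ with $d \geq 2$, I would classify the moduli. Any stable holomorphic disc projects under $\pi$ to a map $(\mathbb{D},\partial\mathbb{D}) \to (\mathbb{C},\{u\})$ with boundary winding $d$; by the maximum principle and a degree argument its image must be the segment $[0,u]$, so the lift to $X_\vartheta$ factors as $u_0 \circ \varphi$ for a degree-$d$ branched cover $\varphi: \mathbb{D} \to \mathbb{D}$. Conversely, such factorizations account for all stable maps in class $d\gamma_e$, and for classes $\gamma \in H_2(X,L_u)$ not proportional to $\gamma_e$ the central-charge calculation in Section 2 forces $\mathcal{M}_\gamma(\mathfrak{X}^{[\omega]},L_u) = \emptyset$ because no $\vartheta \in S^1$ can simultaneously balance the imaginary and real parts of $\int_\gamma \Omega_{s}$.

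The main obstacle is computing the contribution of the $d$-cover moduli to the reduced open invariant. I would build a Kuranishi chart whose obstruction bundle is the pullback via $\varphi$ of $R^1\bar\partial_{u_0^*TX_\vartheta}$, exploit the explicit splitting of $u_0^*TX_\vartheta$ into a vanishing-cycle tangent factor and a normal line, and reduce the integration to the complex-two analogue of the Aspinwall--Morrison residue computation; the missing dimension relative to the Calabi--Yau threefold case is precisely what replaces $1/d^3$ by $1/d^2$. The sign $(-1)^{d-1}$ is extracted from the relative spin framing $V = \mathcal{O}_{K3}(L)$ recalled in Section \ref{88} through the $\mathbb{Z}/d$-representation on the determinant of the obstruction. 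Finally, independence of $[\omega]$ follows from Proposition \ref{10}: the Ooguri-Vafa space carries no compact rational curves, so there are no valid hyperplanes of type II to cross, while type I walls contribute zero by the $\langle \gamma_1,\gamma_2\rangle$-vanishing mechanism already invoked in the proof of Theorem \ref{15}.
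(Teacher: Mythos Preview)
This theorem is not proved in the present paper: it is quoted verbatim from the author's earlier work \cite{L4} and serves here only as evidence for Conjecture \ref{13}. There is therefore no proof in this paper against which to compare your proposal.

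That said, a couple of remarks on your sketch. Your overall strategy---identify the primitive disc explicitly via the Gibbons--Hawking ansatz and $S^1$-symmetry, classify higher-degree discs as branched covers via the projection $\pi$, then compute the obstruction contribution---is the natural one and is in the spirit of what is carried out in \cite{L4}. However, two of your justifications point to the wrong parts of the present paper. First, Section \ref{88} treats the relative spin structure for a \emph{rational} curve $L$ in a compact K3, where $TL$ is nontrivial and one must twist by $V=\mathcal{O}_{K3}(L)$; in the Ooguri--Vafa setting $L_u$ is a smooth $2$-torus with trivial tangent bundle, so the spin structure (and hence the sign analysis) is of a different, simpler nature and cannot be read off from Section \ref{88}. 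Second, Proposition \ref{10} is stated for compact K3 surfaces parametrised by $\mathcal{M}'_{[L]}$ and does not literally apply to the non-compact Ooguri--Vafa space; the underlying reason you cite---absence of compact holomorphic spheres, hence no type II boundary---is correct, but it needs to be argued directly for $X$ rather than invoked via that proposition.
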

 
Here we are going to prove the multiple cover formula and the integrality conjecture for reduced open Gromov-Witten invariants with rigid boundary conditions.
\begin{thm} \label{14}
 Assume that $L$ is a smooth rational curve in a K3 surface $X$. Then there exists integers $\{\Omega^{[\omega]}(\gamma)\in \mathbb{Z}\}$ such that 
     \begin{align}
      \tilde{\Omega}^{[\omega]}(\gamma)=\sum_d \frac{1}{d^2}\Omega^{[\omega]}(\frac{\gamma}{d}),
     \end{align} if $[\omega].\tilde{\gamma}\neq 0$, for every $\tilde{\gamma}\in H_2(X,\mathbb{Z})$ such that $\iota(\tilde{\gamma})=\gamma$. Here we use the convention that $\gamma/k=0$ if $\gamma$ is not $k$-divisible.
\end{thm}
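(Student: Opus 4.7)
The plan is to deduce Theorem \ref{14} as a purely algebraic rearrangement of two results already in hand: the closed formula (\ref{1005}), which writes $\tilde{\Omega}^{[\omega]}(\gamma)$ as a signed sum of reduced closed Gromov-Witten invariants $GW_{red}(\tilde{\gamma})$, and the Maulik-Pandharipande multiple cover formula (\ref{16}), which expresses $n_{\tilde{\gamma}}=GW_{red}(\tilde{\gamma})$ as $\sum_{d\geq 1} d^{-3}\,G_{(\tilde{\gamma}/d)^{2}/2+1}$ with the convention $G_{k}=0$ unless $k\in\mathbb{Z}_{\geq 0}$. Substituting the second into the first gives
\begin{align*}
\tilde{\Omega}^{[\omega]}(\gamma)=\sum_{\tilde{\gamma}:\,\iota(\tilde{\gamma})=\gamma}\;\sum_{d\geq 1}\frac{\epsilon(\tilde{\gamma})\,([L]\cdot\tilde{\gamma})}{d^{3}}\,G_{\tfrac{1}{2}(\tilde{\gamma}/d)^{2}+1},
\end{align*}
where $\epsilon(\tilde{\gamma})\in\{-1,0,+1\}$ denotes the sign in (\ref{888}) prescribed by Theorem \ref{15}.

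The next step is to reindex by writing $\tilde{\gamma}=d\tilde{\gamma}'$. The vanishing convention for $G$ restricts attention to pairs $(d,\tilde{\gamma}')$ with $\tilde{\gamma}'\in\mathbb{L}_{K3}$ and $(\tilde{\gamma}')^{2}\geq -2$. Under this substitution, $[L]\cdot\tilde{\gamma}=d\,([L]\cdot\tilde{\gamma}')$ cancels one power of $d$ and converts the $d^{-3}$ into $d^{-2}$; the index condition $\iota(d\tilde{\gamma}')=\gamma$ becomes $\iota(\tilde{\gamma}')=\gamma/d$, which implicitly forces $\gamma$ to be $d$-divisible in $H_{2}(X,L)$; and $(\tilde{\gamma}/d)^{2}=(\tilde{\gamma}')^{2}$. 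Collecting these identities yields
\begin{align*}
\tilde{\Omega}^{[\omega]}(\gamma)=\sum_{d\geq 1}\frac{1}{d^{2}}\,\Omega^{[\omega]}\!\left(\frac{\gamma}{d}\right),\qquad \Omega^{[\omega]}(\gamma'):=\sum_{\tilde{\gamma}':\,\iota(\tilde{\gamma}')=\gamma'}\epsilon(\tilde{\gamma}')\,([L]\cdot\tilde{\gamma}')\,G_{\tfrac{1}{2}(\tilde{\gamma}')^{2}+1},
\end{align*}
with the convention $\Omega^{[\omega]}(\gamma/d)=0$ whenever $\gamma$ is not $d$-divisible.

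Integrality of $\Omega^{[\omega]}(\gamma')$ is then immediate, since each summand is the product of the integer sign $\epsilon(\tilde{\gamma}')$, the integer lattice pairing $[L]\cdot\tilde{\gamma}'$, and the Yau-Zaslow integer $G_{(\tilde{\gamma}')^{2}/2+1}$; finiteness of the inner sum follows from $(\tilde{\gamma}')^{2}\geq -2$ together with the observation (made just above Proposition \ref{10}) that lifts of a fixed class in $H_{2}(X,L)$ differ by integer multiples of $[L]$, so that $(\tilde{\gamma}'+n[L])^{2}\sim -2n^{2}$ becomes too negative for large $|n|$. I expect the one genuinely nontrivial point in the rearrangement to be verifying that the coefficient $\Omega^{[\omega]}(\gamma/d)$ really depends only on $\gamma/d$ and $[\omega]$, not separately on $d$; this reduces to the identity $\epsilon(d\tilde{\gamma}')=\epsilon(\tilde{\gamma}')$ for every $d\geq 1$, which follows from the positive-homogeneity of the explicit sign formula $\tfrac{1}{2}(\mbox{sgn}(\omega_{1}\cdot\tilde{\gamma})-\mbox{sgn}(\omega_{0}\cdot\tilde{\gamma}))$ of Theorem \ref{15}, together with the observation that the K\"ahler walls $\{[\omega]\cdot\tilde{\gamma}=0\}$ and $\{[\omega]\cdot d\tilde{\gamma}'=0\}$ coincide for $d>0$. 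Once this sign check is made, the theorem is pure bookkeeping.
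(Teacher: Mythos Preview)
Your proposal is correct and follows essentially the same route as the paper. The paper argues by starting from the vanishing point of Theorem \ref{2001} and analysing each wall-crossing contribution $\Delta\tilde{\Omega}(\gamma)$ separately, whereas you start from the already-summed closed formula (\ref{1005}); but since (\ref{1005}) is precisely ``vanishing plus all wall-crossings'', the two arguments coincide, and the core algebraic step---absorbing one factor of $d^{-1}$ into $[L]\cdot\tilde{\gamma}=d\,([L]\cdot\tilde{\gamma}')$ to convert $d^{-3}$ into $d^{-2}$---is identical in both. Your write-up is in fact more careful on two points the paper leaves implicit: the sign homogeneity $\epsilon(d\tilde{\gamma}')=\epsilon(\tilde{\gamma}')$ and the finiteness of the defining sum for $\Omega^{[\omega]}$.
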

\begin{proof}
We will use following easy fact from algebra:
\begin{lem}
  Let $\gamma\neq 0\in H_2(X,L)$ and $\tilde{\gamma}$ be any lifting in $H_2(X)$. If $\tilde{\gamma}$ is $k$-divisible then $\gamma$ is $k$-divisible.
\end{lem}
From Theorem \ref{2001}, given a relative homology class $\gamma$, there exists a K3 surface $(X_s,\alpha)\in \mathcal{M}'_{[L]}$ and a K\"ahler class $[\omega_0]$ such that the open Gromov-Witten invariants vanishes
   \begin{align*}
      \tilde{\Omega}^{[\omega_0]}(\gamma,s)=0.
   \end{align*} In particular, this also implies vanishing of corresponding $\Omega^{[\omega_0]}$ and Theorem \ref{14} follows. For different choices of $[\omega]$, one only need to prove that the wall-crossing term has the similar property. From Theorem \ref{15} and (\ref{16}),
     \begin{align*}
      \Delta\tilde{\Omega}(\gamma)&=\pm ([L].\tilde{\gamma})GW_{red}(\tilde{\gamma}) \\
                                  &=\pm ([L].\tilde{\gamma})\sum_k \frac{1}{k^3}G_{\frac{1}{2}(\frac{\tilde{\gamma}}{k})^2+1}\\
                                  &=\pm \sum_k \frac{1}{k^2}([L].\frac{\tilde{\gamma}}{k})G_{\frac{1}{2}(\frac{\tilde{\gamma}}{k})^2+1}.
     \end{align*}Therefore, 
     \begin{align} 
     \Delta\Omega(\gamma)=\pm  ([L].\frac{\tilde{\gamma}}{k})G_{\frac{1}{2}(\frac{\tilde{\gamma}}{k})^2+1}
     \end{align}are integers from Yau-Zaslow formula and (\ref{8}). This finishes the proof of Theorem \ref{14}. In particular, we see the contribution of $d$-folds multiple cover to the open reduced Gromov-Witten invariants is $1/d^2$. 
     
\end{proof}
A direct consequence of the multiple cover formula is the "reality condition"\footnote{The relevant moduli spaces associated to both sides of (\ref{51}) is diffeomorphic. The corollary can also be derived from directly checking the orientations defined in Section \ref{88} associated to the two moduli spaces are the same.}. Indeed, each $\tilde{\gamma}$ contribute the same to $\tilde{\Omega}^{[\omega]}(\gamma)$ as $-\tilde{\gamma}$ contributes to $\tilde{\Omega}^{[\omega]}(-\gamma)$. Therefore, we have the following "reality condition":
     \begin{cor} \label{reality}
       Let $\gamma\in H_2(X,L)$ be a relative class, then 
         \begin{align} \label{51}
           \tilde{\Omega}^{[\omega]}(s,-\gamma)=\tilde{\Omega}^{[\omega]}(s,\gamma).   \end{align}
     \end{cor}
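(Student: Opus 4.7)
The plan is to derive (\ref{51}) directly from the closed formula (\ref{1005}), which expresses $\tilde{\Omega}^{[\omega]}(\gamma)$ as a signed sum indexed by liftings $\tilde{\gamma} \in H_2(X,\mathbb{Z})$ of $\gamma$ under $\iota$. First I would observe that the map $\tilde{\gamma} \mapsto -\tilde{\gamma}$ is a bijection between liftings of $\gamma$ and liftings of $-\gamma$, since $\iota\colon H_2(X)\to H_2(X,L)$ is a group homomorphism. This places the two sums (\ref{1005}) for $\pm\gamma$ in canonical term-by-term correspondence, and the problem reduces to matching the three factors appearing in each term.

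Next I would track how each factor on the right-hand side of (\ref{1005}) transforms under this involution. The intersection pairing satisfies $[L]\cdot(-\tilde{\gamma}) = -([L]\cdot\tilde{\gamma})$, contributing one minus sign. The reduced Gromov--Witten invariant $GW_{red}(\tilde{\gamma})$ depends only on the self-intersection $\langle \tilde{\gamma},\tilde{\gamma}\rangle$, which is the content of the multiple cover formula (\ref{16}), so $GW_{red}(-\tilde{\gamma}) = GW_{red}(\tilde{\gamma})$ contributes no sign. Finally, the wall-crossing sign from Theorem \ref{15}, namely $\tfrac{1}{2}\bigl(\mbox{sgn}(\omega_1\cdot\tilde{\gamma})-\mbox{sgn}(\omega_0\cdot\tilde{\gamma})\bigr)$, is an odd function of $\tilde{\gamma}$ (well-defined since by hypothesis $[\omega_i]\cdot\tilde{\gamma}\neq 0$), and hence also flips. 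The two minus signs cancel, so the contribution of $-\tilde{\gamma}$ to $\tilde{\Omega}^{[\omega]}(-\gamma)$ matches the contribution of $\tilde{\gamma}$ to $\tilde{\Omega}^{[\omega]}(\gamma)$, and summing over $\tilde{\gamma}$ yields the claimed identity.

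To close the argument, I would verify that the baseline vanishing that anchors (\ref{1005}) is itself symmetric under $\gamma \mapsto -\gamma$. This is immediate from Theorem \ref{2001}: the divisor $\mathcal{D}_{\tilde{\gamma}}\subseteq \mathcal{M}'_{[L]}$ is cut out by $\alpha^\vee(\tilde{\gamma})\wedge \Omega_X = 0$, a condition plainly invariant under $\tilde{\gamma}\mapsto-\tilde{\gamma}$, so $\mathcal{D}_{-\tilde{\gamma}} = \mathcal{D}_{\tilde{\gamma}}$. Hence $\tilde{\Omega}^{[\omega_0]}(s_0,\pm\gamma)$ vanish simultaneously at a common base point $(s_0,\omega_0)$, and the preceding sign analysis propagates this common vanishing across walls via Theorem \ref{15} to give (\ref{51}) for arbitrary $[\omega]$. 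The only genuinely delicate step is the sign cancellation; there is no substantive geometric obstacle beyond this bookkeeping, in agreement with the footnote's interpretation of the corollary as an orientation statement on the two diffeomorphic moduli spaces.
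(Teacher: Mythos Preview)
Your proposal is correct and follows essentially the same route as the paper: both arguments use the closed formula (\ref{1005}) and observe that under the bijection $\tilde{\gamma}\mapsto -\tilde{\gamma}$ between liftings of $\gamma$ and of $-\gamma$, each summand is preserved (the paper states this as ``each $\tilde{\gamma}$ contributes the same to $\tilde{\Omega}^{[\omega]}(\gamma)$ as $-\tilde{\gamma}$ contributes to $\tilde{\Omega}^{[\omega]}(-\gamma)$'' without spelling out the sign bookkeeping). Your version is simply more explicit about why the two sign flips cancel and why the baseline vanishing point is common, which the paper leaves implicit.
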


%
	
\begin{rmk}
  For the case that the curve class $\beta\in H_2(X)$ is exactly divisible by two, the genus zero multiple cover formula for reduced Gromov-Witten invariants is 
    \begin{align*}
       N_{\beta}=G_{4g-3}+\frac{1}{8}G_g.
    \end{align*}
  Wu \cite{W3} proved that the integer $G_{4g-3}$ exactly counts the number of genus zero stable maps $f:C\rightarrow X$ with $f_*([C])=\beta$ and not a double cover factoring through the normalization of $f(C)$, while $G_g$ is the number of genus zero stable maps with the image curve of homology class $\frac{1}{2}\beta$. From the Theorem \ref{15} and Theorem \ref{14}, there is also a similar enumerative interpretation for the multiple cover formula of the reduced open Gromov-Witten invariants: for a relative class $\gamma \in H_2(X,L)$ and $\tilde{\gamma}\in H_2(X)$ such that $\iota({\tilde{\gamma}})=\gamma$ is exactly $2$-divisible, then the wall-crossing term $\Delta \tilde{\Omega}(\gamma)$ satisfies the multiple cover formula 
     \begin{align*}
        \Delta \tilde{\Omega}(\gamma)= \Delta\Omega(\gamma)+ \frac{1}{4}\Delta\Omega(\frac{1}{2}\gamma).
     \end{align*} The integer $\Delta  \Omega(\frac{1}{2}\gamma)$ counts the number of difference of the number of stable pseudo-holomorphic discs in the relative class $\frac{1}{2}\gamma$ after a generic perturbation of the almost complex structures of the $S^1$-family of complex structures. The integer $\Delta\Omega(\gamma)$ counts the difference of the number of pseudo-holomorphic discs in the relative class $\gamma$ which do not factor through the normalization of their images when $[\omega]$ varies across the valid layer labeled by $\tilde{\gamma}$ after a generic perturbation. It is interesting to ask the enumerative meaning of the multiple cover formula of reduced open Gromov-Witten invariants for the general situation.
\end{rmk}

\begin{bibdiv}
\begin{biblist}
\bibselect{file001}
\end{biblist}
\end{bibdiv}

Department of Mathematics, Stanford University\\
 E-mail address: yslin221@stanford.edu

\end{document}